\newtheorem{thm}{Theorem}
\newtheorem{defn}{Definition}
\newtheorem{pro}{Proposition}
\newtheorem{cor}{Corollary}
\newtheorem{ex}{Example}
\numberwithin{equation}{section} \setcounter{tocdepth}{1}
\begin{document}
\title [Some remarks on evolution algebras corresponding to permutations.]
{Some remarks on evolution algebras corresponding to permutations.}

\author{B. A. Narkuziyev}

\address{Institute of Mathematics, Tashkent, 100125, Uzbekistan.}
 \email { bnarkuziev@yandex.ru}

\begin{abstract} In the present paper we  describe  absolute nilpotent and some idempotent elements of an $n$- dimensional evolution algebra corresponding to two permutations and we decompose such algebras to the direct sum of evolution algebras corresponding to cycles of the permutations.
\end{abstract}

\subjclass[2010] {17D92; 17D99; }

\keywords{Evolution algebra, algebra of permutations, baric algebra, absolute nilpotent and idempotent elements, isomorphism of algebras.} \maketitle

\section{Introduction}

An evolution algebra is an abstract system, it gives an insight for the study of non-Mendelian genetics. Results on genetic evolution and genetic algebras, can be found in Lyubich's book \cite{Lyu1992}.
In Tian's book \cite{Tian2008} the foundations of evolution algebras are developed and several basic properties are studied. The concept of evolution algebra lies between algebras and dynamical systems. Algebraically, evolution algebras are non-associative Banach algebras; dynamically, they represent discrete dynamical systems. Evolution algebras have many connections with other mathematical fields including graph theory, group theory, Markov chains, dynamic systems, stochastic processes, mathematical physics, etc.\cite{CasLad2011}-\cite{Lyu1992}. Rozikov and Tian \cite{Roz2011} studied algebraic structures evolution algebras associated with Gibbs measures defined on some graphs. In \cite{CasLad2011}, \cite{CasLad2013} some properties of chain of evolution algebras, dibaricity  of evolution algebras, a criteria for an evolution algebra to be baric were studied. In \cite{KhudOmirov2015}, \cite{Nar2014} some properties of evolution algebra corresponding to a permutation  were studied. In this paper we study evolution algebras corresponding to two permutations.

The paper is organized as follows. In  Section 2 we give main definitions and some properties of  $n$- dimensional evolution algebra corresponding to two permutations. Therein  we also  give a criterion for this algebra  to be baric. In Section 3 we obtain a necessary and sufficient conditions to have unique trivial absolute nilpotent element for any $n$-dimensional
evolution algebras with rank of matrix of structural constants is $n-2$ and absolute nilpotent elements of
 evolution algebras corresponding to two  permutations are fully studied. In Section 4 we study idempotent elements of the  evolution algebra  and give a general  analysis of idempotent elements of the two- dimensional evolution algebra corresponding to permutations. Section 5 is devoted to the isomorphisms of evolution algebras corresponding to  permutations.

\section{A criterion for an  $n$- dimensional evolution algebra corresponding to two permutations to be baric.}

\begin{defn} Let $(E,\cdot)$ be an  algebra  over  a  field  $K$. If it admits a basis $e_1,e_2...,$ such that
 $$e_{i}\cdot e_{j}=0, \ \ \ \ \ \ \ \ \ \ if \ \ \ i\neq j \ ; $$
$$
\ e_{i}\cdot e_{i}=\sum\limits_{k} a_{ik}e_k \ ,\ \ \ \ \ \ \ \ \ \
 for\ \ \ any\ \ \ \ i\ ,
$$
 then this algebra is called an $evolution \  algebra$. This basis is  called natural basis.
\end{defn}
We denote by $M=(a_{ij})$  the  matrix  of    the structural constants of the evolution algebra  $E$.

Let $S_{n}$ be  the group  of   permutations  of  degree   $n$. Take $\pi,\tau \in S_n,$

$$ \pi=\left( \begin{array}{cc} 1 \,\,\,\ \ \ \ 2 \ \ \ \,\,\,\ 3 \ \ \ \ \ \ \ ...\ \ \ \,\,\,\ n \\
\pi(1) \,\ \pi(2) \,\ \pi(3) \ \ \  ... \,\ \ \ \pi(n)\end{array} \right),\ \ \ $$

$$ \ \ \ \ \ \ \ \ \ \ \ \ \ \ \ \ \ \ \ \tau=\left( \begin{array}{cc} 1 \,\,\,\ \ \ \ 2 \ \ \ \,\,\,\ 3 \ \ \ \ \ \ \ ...\ \ \ \,\,\,\ n \\
\tau(1) \,\ \tau(2) \,\ \tau(3) \ \ \  ... \,\ \ \ \tau(n)\end{array} \right), \ \ \ \pi(i), \tau(i)\in \{1,...,n\}.$$

Consider a $n$-dimensional evolution  algebra  $E^{n}_{\pi,\tau}$ over  the  field   $R$  with  a  finite  natural  basis   $e_1,e_2,...e_n,$ and
    multiplication   given  by:
    $$e_{i}\cdot e_{j}=0, \ \  if \ \  i\neq j \ ;\ \ \ \ $$
 $$  e_{i}\cdot e_{i}=a_{i\pi(i)}\cdot e_{\pi(i)} + a_{i\tau(i)}\cdot e_{\tau(i)},\ \ \
 for\ \ \ any\ \ \ \ i\ ,$$
then this algebra  is  called  an  \emph{evolution algebra corresponding to permutations} $\pi$ and $\tau$.

\
We assume that $ \pi\neq\tau.$ For the case $ \pi=\tau$ some properties of the algebra are known \cite{KhudOmirov2015}-\cite{Nar2014}.

The following properties  of  evolution algebras are known \cite{Tian2008}:

\begin {itemize}
\item[(1)] Evolution algebras are not associative, in general.

\item[(2)] Evolution algebras are commutative, flexible.

\item[(3)] Evolution algebras are not power-associative, in general.

\item[(4)] The direct sum of evolution algebras is also an evolution algebra.

\item[(5)] The Kronecker product of evolution algebras is an evolution algebra.
\end {itemize}
These  properties  are  hold  for  $E^{n}_{\pi,\tau}$   too.

A  $character$  for  an  algebra   $\mathcal A$  is  a  nonzero  multiplicative  linear form on  $\mathcal A$ , that is,
 a   nonzero  algebra  homomorphism  from   $\mathcal A$  to $R$ \cite{Lyu1992} .

\begin{defn}  A   pair  $( \mathcal A ,\sigma)$  consisting \  of \  an\   algebra \  $\mathcal A$ \  and \  a \  character $\sigma$
 on \  $\mathcal A$  is  called   a\   baric\  algebra.\  The \ homomorphism $\sigma$  is\  called\   the\   weight (or baric) function
  of  $\mathcal A$  and  $\sigma(x)$ the weight (baric value) of  $x$.
\end{defn}

\begin{thm}\label{thm1}
An  evolution algebra $ E^{n}_{\pi,\tau }$  over the field  R, is baric if and only if one of the following conditions holds:
\begin {itemize}
\item [1)] $k_{0}$ is a fixed point for $\pi$ (or $\tau $),$ \ \tau({k_{0}})\neq k_{0} \ (or\  \pi(k_{0})\neq k_{0})$  and  its  matrix  $M=(a_{ij})\ \ i,j=1,...,n$ of structural constants satisfies  $\ \  a_{k_{0}k_{0}}\neq0$, $a_{\tau^{-1}(k_{0})k_{0}}=0$
     $\   (or \  a_{\pi^{-1}(k_{0})k_{0}}=0)$.
  In this case the corresponding weight function is $ \sigma(x)=a_{k_{0}k_{0}} x_{k_{0}}.$

 \item[2)]   $k_{0}$ is a fixed point for both $\pi$ and $\tau$, and $a_{k_{0}k_{0}}\neq0.$ In this case the corresponding weight function is $ \sigma(x)=2 a_{k_{0}k_{0}} x_{k_{0}}.$
\end {itemize}
\end{thm}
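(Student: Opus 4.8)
The plan is to argue directly from the definition of a character, exploiting the orthogonality of the natural basis, which is what makes the problem essentially combinatorial. Write a general element as $x=\sum_{i=1}^n x_i e_i$, set $\sigma(e_i)=\sigma_i$, so that by linearity $\sigma(x)=\sum_i \sigma_i x_i$, and recall that $\sigma$ is a character precisely when it is nonzero and satisfies $\sigma(e_i\cdot e_j)=\sigma(e_i)\sigma(e_j)$ for all $i,j$. The first step is to apply this to the relations $e_i\cdot e_j=0$ for $i\neq j$: this yields $\sigma_i\sigma_j=0$ for every pair $i\neq j$. Since $\sigma$ is nonzero, at most one coordinate can survive, so there is a unique index $k_0$ with $\sigma_{k_0}\neq 0$ and $\sigma_i=0$ for $i\neq k_0$. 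Hence every character has the form $\sigma(x)=\sigma_{k_0}x_{k_0}$, and it only remains to determine the admissible $k_0$ and the admissible value $\sigma_{k_0}$.

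The second step is to impose the diagonal relations $\sigma(e_i\cdot e_i)=\sigma_i^2$. Using $e_i\cdot e_i=a_{i\pi(i)}e_{\pi(i)}+a_{i\tau(i)}e_{\tau(i)}$, the left-hand side equals $a_{i\pi(i)}\sigma_{\pi(i)}+a_{i\tau(i)}\sigma_{\tau(i)}$, and since only $\sigma_{k_0}$ is nonzero, a term contributes exactly when $\pi(i)=k_0$ or $\tau(i)=k_0$. I would treat the equation for $i=k_0$ separately from those for $i\neq k_0$. For $i=k_0$ the right-hand side is $\sigma_{k_0}^2\neq 0$, so the left-hand side cannot vanish; this is possible only if $k_0$ is a fixed point of $\pi$ or of $\tau$. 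If $k_0$ is fixed by exactly one of them, the equation reduces to $a_{k_0k_0}\sigma_{k_0}=\sigma_{k_0}^2$, which forces $a_{k_0k_0}\neq 0$ and $\sigma_{k_0}=a_{k_0k_0}$, yielding case 1); if $k_0$ is fixed by both, the two terms coincide and give $2a_{k_0k_0}\sigma_{k_0}=\sigma_{k_0}^2$, forcing $a_{k_0k_0}\neq 0$ and $\sigma_{k_0}=2a_{k_0k_0}$, yielding case 2).

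The third step handles the equations for $i\neq k_0$, which read $a_{i\pi(i)}\sigma_{\pi(i)}+a_{i\tau(i)}\sigma_{\tau(i)}=0$; here a term survives only at $i=\pi^{-1}(k_0)$ or $i=\tau^{-1}(k_0)$. In case 2 both of these indices equal $k_0$ (since $\pi,\tau$ fix $k_0$), so no further condition arises, matching the statement. In case 1, say $k_0$ is fixed by $\pi$ with $\tau(k_0)\neq k_0$; then $\pi^{-1}(k_0)=k_0$ contributes nothing, while $i=\tau^{-1}(k_0)\neq k_0$ gives $a_{\tau^{-1}(k_0)k_0}\sigma_{k_0}=0$, i.e. $a_{\tau^{-1}(k_0)k_0}=0$, which is exactly the stated condition (and symmetrically with $\pi$ and $\tau$ interchanged). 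This establishes necessity of all listed conditions.

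Finally, for the converse I would verify that the functional $\sigma(x)=a_{k_0k_0}x_{k_0}$ (resp. $\sigma(x)=2a_{k_0k_0}x_{k_0}$) is genuinely multiplicative under the stated hypotheses, which is a short check of $\sigma(e_i\cdot e_j)=\sigma_i\sigma_j$ reusing the same case split. I do not anticipate a serious obstacle: the real content is the opening observation that orthogonality confines $\sigma$ to a single coordinate, after which everything is bookkeeping. The only points requiring care are tracking which permutation fixes $k_0$, identifying the surviving off-diagonal index correctly as $\tau^{-1}(k_0)$ (resp. $\pi^{-1}(k_0)$), and noting that under the hypotheses of case 1 the indices $\pi^{-1}(k_0)$ and $\tau^{-1}(k_0)$ are distinct, so no spurious coincidence of constraints occurs.
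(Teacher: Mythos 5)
Your proof is correct, but it takes a genuinely different route from the paper's. The paper disposes of the theorem in one line: it invokes Theorem 3.2 of \cite{CasLad2011} --- the general criterion that an evolution algebra is baric if and only if some column of its structural matrix has its unique nonzero entry on the diagonal --- and merely observes that for $E^{n}_{\pi,\tau}$ the $k_0$-th column of $M$ contains at most the two entries $a_{\pi^{-1}(k_0)k_0}$ and $a_{\tau^{-1}(k_0)k_0}$, the diagonal entry being present (equal to $a_{k_0k_0}$, or $2a_{k_0k_0}$ when the two coincide) precisely when $\pi$ or $\tau$ fixes $k_0$. Your argument is instead self-contained: your first two steps (orthogonality of the natural basis forces any character to be supported on a single coordinate $k_0$; the diagonal relation at $i=k_0$ then pins down $\sigma_{k_0}$ and forces $k_0$ to be a fixed point of $\pi$ or $\tau$) are in effect a proof of the cited criterion specialized to this algebra, and your third step recovers exactly the vanishing condition $a_{\tau^{-1}(k_0)k_0}=0$ (resp.\ $a_{\pi^{-1}(k_0)k_0}=0$) that the paper encodes as ``only one nonzero element in the $k_0$-th column.'' What the paper's approach buys is brevity and placement of the result inside the general theory; what yours buys is independence from the reference and an explicit account of where each hypothesis comes from --- in particular the factor $2$ in case 2), which in your computation arises transparently from $e_{k_0}^2=2a_{k_0k_0}e_{k_0}$ when both permutations fix $k_0$, and the observation that in case 1) the indices $\pi^{-1}(k_0)=k_0$ and $\tau^{-1}(k_0)\neq k_0$ are distinct, so the two constraints never collide. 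Both directions of your equivalence are sound; the sufficiency verification you sketch does go through by the same case split, since multiplicativity need only be checked on basis pairs.
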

\begin{proof}
Follows from Theorem 3.2  in  \cite{CasLad2011}, because there is only one element $a_{k_{0}k_{0}}$
(or $2\cdot a_{k_{0}k_{0}}$) which is not equal to zero in the   $ k_{0}^{th}$ column.
\end{proof}

An  evolution algebra $ E^{n}_{\pi,\tau }$ may have several weight functions. As a corollary of Theorem \ref{thm1} we have
\begin{cor}
If the permutations $\pi$ and $ \tau$, mentioned in Theorem \ref{thm1} have $m$ fixed points $k_{m}, \ m\leq n$ which satisfy conditions of Theorem \ref{thm1}, then the evolution algebra $ E^{n}_{\pi,\tau }$ has exactly m weight functions $ \sigma(x)=a_{k_{m}k_{m}}\cdot x_{k_{m}} $ (or $ \sigma(x)=2a_{k_{m}k_{m}}\cdot x_{k_{m}}$ ).
\end{cor}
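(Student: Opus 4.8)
The plan is to deduce the corollary directly from the ``if and only if'' characterization in Theorem \ref{thm1}, by showing that every weight function of $E^n_{\pi,\tau}$ is attached to exactly one fixed point satisfying the hypotheses, and that distinct fixed points yield distinct weight functions. First I would recall how a character $\sigma$ is pinned down on an evolution algebra. Since $\sigma$ is linear it is determined by the scalars $c_i := \sigma(e_i)$, and multiplicativity applied to the relations $e_i\cdot e_j = 0$ for $i\neq j$ gives $c_i c_j = \sigma(e_i)\sigma(e_j) = \sigma(e_i\cdot e_j) = 0$. Hence at most one of the $c_i$ can be nonzero; as $\sigma\neq 0$, exactly one is, say $c_{k_0}\neq 0$, and $\sigma(x) = c_{k_0} x_{k_0}$. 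Thus every weight function is supported on a single basis coordinate.

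Next I would identify the admissible index $k_0$. Feeding the diagonal relation $e_{k_0}\cdot e_{k_0} = a_{k_0\pi(k_0)}e_{\pi(k_0)} + a_{k_0\tau(k_0)}e_{\tau(k_0)}$ into multiplicativity yields $c_{k_0}^2 = a_{k_0\pi(k_0)}c_{\pi(k_0)} + a_{k_0\tau(k_0)}c_{\tau(k_0)}$; since $c_j = 0$ for $j\neq k_0$, the right-hand side can be nonzero only if $\pi(k_0) = k_0$ or $\tau(k_0) = k_0$, forcing $k_0$ to be a fixed point. This is precisely the situation analysed in Theorem \ref{thm1}, whose two cases fix the value $c_{k_0} = a_{k_0 k_0}$ (respectively $c_{k_0} = 2a_{k_0 k_0}$) and impose the remaining column conditions. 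Consequently the weight functions are in bijection with those fixed points of $\pi$ and $\tau$ satisfying the hypotheses of Theorem \ref{thm1}, each fixed point $k_j$ contributing exactly the form $\sigma(x) = a_{k_j k_j} x_{k_j}$ (or $2a_{k_j k_j} x_{k_j}$) listed there.

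It remains to count. By hypothesis there are $m$ such fixed points $k_1,\dots,k_m$, so the bijection above produces exactly $m$ weight functions. They are pairwise distinct: the form attached to $k_j$ is a nonzero multiple of the coordinate $x_{k_j}$ (nonzero because $a_{k_j k_j}\neq 0$ by Theorem \ref{thm1}), and distinct indices $k_j$ give linear forms with distinct supports. Hence $E^n_{\pi,\tau}$ has precisely $m$ weight functions. The only point requiring care is the completeness claim -- that no weight function escapes this list -- which is exactly what the single-coordinate support argument of the first step guarantees; so I expect no serious obstacle beyond this bookkeeping.
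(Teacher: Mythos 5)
Your proof is correct, but it takes a genuinely different route from the paper. The paper offers no argument for the corollary at all: it is stated as an immediate consequence of Theorem \ref{thm1}, whose own proof is a one-line citation to Theorem 3.2 of \cite{CasLad2011} (the point being that the $k_0$-th column of $M$ contains the single nonzero entry $a_{k_0k_0}$, resp.\ $2a_{k_0k_0}$). You instead rebuild the character theory from scratch: the relations $e_i\cdot e_j=0$ force any character to be supported on a single coordinate, the diagonal relation $e_{k_0}\cdot e_{k_0}=a_{k_0\pi(k_0)}e_{\pi(k_0)}+a_{k_0\tau(k_0)}e_{\tau(k_0)}$ forces that coordinate to be a fixed point and pins down $c_{k_0}\in\{a_{k_0k_0},\,2a_{k_0k_0}\}$, and distinct supports give distinct characters. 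What your route buys is exactly the content the paper leaves implicit: the completeness step (no weight function escapes the list) and the injectivity step (distinct fixed points give distinct weight functions), both of which are needed for the word ``exactly'' and neither of which follows from the bare statement of Theorem \ref{thm1}, which only characterizes when at least one weight function exists. One small point to tighten: to rule out a weight function supported at a fixed point \emph{not} among the $m$ admissible ones, you should also apply multiplicativity to the remaining diagonal relations; for instance, taking $i=\tau^{-1}(k_0)$ gives $0=c_i^2=\sigma(e_i\cdot e_i)=a_{\tau^{-1}(k_0)k_0}\,c_{k_0}$, hence $a_{\tau^{-1}(k_0)k_0}=0$. This derives the column condition of Theorem \ref{thm1} rather than merely quoting it, and fully closes the bijection you assert.
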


\section{Absolute nilpotent elements.}

\begin{defn}
An element $x$ of an algebra $\mathcal A$ \ is called an $absolute \ nilpotent$ if $x^{2}=0$.
\end{defn}

Let $E=R^{n}$ be an evolution algebra over the  field $R$ with the matrix
    $M=(a_{ij})$ of structural constants, then for any $x=\sum\limits_{i}x_{i}e_{i}$ and
   $y=\sum\limits_{i}y_{i}e_{i}\in R^{n}$ we have
    $$x y=\sum\limits_{i}x_{i}y_{i}e_{i}e_{i}=\sum\limits_{j}( \sum\limits_{i}a_{ij}x_{i}y_{i} )e_{j} ,\\\\\ x^{2}=\sum\limits_{j}(\sum\limits_{i} a_{ij} x^{2}_{i})e_{j}.$$

For a $n$-dimensional  evolution algebra $ E $ over the field $R$ consider operator $V:R^{n}\rightarrow R^{n},\ x\mapsto V(x)=x' $ defined as \
$x'_{j}=\sum\limits_{i=1}^{n}a_{ij} x^{2}_{i}, \   j=1,...,n.$ This operator is called $evolution \  operator$  \cite{Lyu1992} .\\
We have $V(x)=x^{2}$, hence the equation $V(x)=x^{2}=0$ is given by the following system
\begin{equation} \label{3.1}
\sum\limits_{i=1}^{n}a_{ij}x^{2}_{i}=0,\ \ \ \ \ \ j=1,...,n.
\end{equation}
In this section we consider absolute nilpotent elements of evolution algebras and evolution algebra corresponding to two permutations( see \cite{CasLad2011},  \cite{Tian2008},\cite {KhudOmirov2015},\cite {Nar2014} ).

Let $M^{T}$ is transposed matrix of $M$ then we know that $det (M)=det (M^{T})$ therefore   if $det(M)\neq 0$ then the system $(3.1)$ has the unique solution $(0,...,0).$ If $det(M)= 0$ and $rank(M)=r$ then we can assume that the first $r$ rows of $M$
are linearly independent, consequently, the system (\ref{3.1}) can be written as
\begin{equation} \label{3.2}
 x^{2}_{i}=-\sum \limits_{j=r+1}^{n}d_{ij} x^{2}_{j},\ \ \  i=1,...,r.
 \end{equation}

where $d_{ij}=\frac{det(M_{ij}^{T})}{det(M_{r})}$ with $ M_{r}=(a_{ik})_{i,k=1,...,r},  $
$$ M_{ij}^{T}= \left( \begin{array}{cccc} a_{11}\ ... \ a_{i-1,1}\ a_{j1} \ a_{i+1,1} \ ...  \ a_{r1}
    \\ a_{12}\ ... \ a_{i-1,2}\ a_{j2} \ a_{i+1,2} \ ...  \ a_{r2} \\ ...\ \ \ \ \ \ ... \ \ \ \ \ \ ... \ \ \ \ \ \ \ .... \\
 a_{1r}\ ... \ a_{i-1,r}\ a_{jr} \ a_{i+1,r} \ ...  \ a_{rr} \end{array} \right) \  . $$

An interesting problem is to find a necessary and sufficient condition on matrix $ D=(d_{ij})_{i=1,...,r ;  \ j=r+1,...,n }\ \ $ under which the system (\ref{3.2}) has a unique solution. The difficulty of the problem depends on rank $r$. In \cite{CasLad2011} the case $rank(M)=n-1 $ is well studied. That is why  here we shall
consider the case $rank(M)=n-2 $. In this case the system of eq.(\ref{3.2}) will be
\begin{equation}\label{3.3}
 x^{2}_{i}=-(d_{i,n-1}\cdot x^{2}_{n-1}+ d_{i,n}\cdot x^{2}_{n}),\ \ i=1,...,n-2.
 \end{equation}
The following theorem identifies the necessary and sufficient conditions for the system (\ref{3.3}) to have a unique  trivial solution.
\begin{thm}\label{thm2}
Let  $rank(M)=n-2$. The evolution algebra $E=R^{n}$ has a unique absolute nilpotent element $(0,...,0)$ if and only if one of the following conditions is satisfied:
\begin {itemize}
 \item[(i)]  $  \left\{\begin{array}{ll} d_{i_{0},n-1}>0 \\ d_{i_{0},n}>0\end{array} \right.$ for some $i_{0}\in \{1,...,n-2\}$ ;

 \item[(ii)] $ \left\{\begin{array}{lll} d_{k,n-1}>0 \\ d_{k,n}=0 \\ d_{m,n}>0
 \end{array} \right. \  or  \  \left\{\begin{array}{lll} d_{k,n-1}=0 \\ d_{k,n}>0 \\ d_{m,n-1}>0
 \end{array} \right. $ for some  $ k, m \in \{1,...,n-2\}$ ;

 \item[(iii)]  $  \left\{\begin{array}{ll} d_{s,n-1}> 0 \\ d_{s,n}< 0\end{array} \right. $
  and $  \left\{\begin{array}{ll} d_{t,n-1}< 0 \\ d_{t,n}> 0\end{array} \right. $
  for some $ s, t \in \{1,...,n-2 \}$
   and $$ \max_t \{ {\frac{-d_{t,n}}{d_{t,n-1}}}:d_{t,n-1}< 0,\ d_{t,n}> 0 \}>  \min_s \{ {\frac{-d_{s,n}}{d_{s,n-1}}}: d_{s,n-1}> 0 ,\ d_{s,n}< 0 \}.$$
\end {itemize}
\end{thm}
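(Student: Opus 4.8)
The plan is to reduce the problem to a two-dimensional convexity question in the non-negative variables coming from the squares $x_i^2$. Since the unknowns are real, every $x_i^2$ is non-negative, so it is natural to set $u=x_{n-1}^2\ge 0$ and $v=x_n^2\ge 0$ and to read (\ref{3.3}) as $x_i^2=-(d_{i,n-1}u+d_{i,n}v)$ for $i=1,\dots,n-2$. A real vector $x$ with $x^2=0$ therefore exists precisely when one can choose $u,v\ge 0$ so that all these right-hand sides are $\ge 0$; given such $u,v$ one simply recovers $x_{n-1}=\sqrt u$, $x_n=\sqrt v$ and $x_i=\pm\sqrt{-(d_{i,n-1}u+d_{i,n}v)}$. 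Hence $(0,\dots,0)$ is the only absolute nilpotent element if and only if the closed convex cone
\[
S=\{(u,v):u\ge 0,\ v\ge 0,\ d_{i,n-1}u+d_{i,n}v\le 0\ \text{ for all } i=1,\dots,n-2\}
\]
reduces to $\{(0,0)\}$. First I would establish this equivalence carefully, so that the remainder of the argument is purely about when $S=\{(0,0)\}$.

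Next I would analyse $S$ directionally. Being a convex cone in the plane, $S=\{(0,0)\}$ if and only if $S$ contains no ray, i.e. no nonzero direction in the closed first quadrant satisfies all the inequalities. I would classify each row by the sign pattern of the pair $(d_{i,n-1},d_{i,n})$. A row with $d_{i,n-1}>0$ and $d_{i,n}>0$ already forces $u=v=0$ on the first quadrant, which is exactly condition (i). For interior directions $u,v>0$ I would set $\sigma=u/v\ge 0$ and rewrite each inequality as $d_{i,n-1}\sigma+d_{i,n}\le 0$: a row with $d_{i,n-1}>0,\ d_{i,n}<0$ contributes the upper bound $\sigma\le -d_{i,n}/d_{i,n-1}$, a row with $d_{i,n-1}<0,\ d_{i,n}>0$ contributes the lower bound $\sigma\ge -d_{i,n}/d_{i,n-1}$, and rows with both coefficients $\le 0$ impose nothing. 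Thus an admissible interior direction exists iff the window $\max_t\{-d_{t,n}/d_{t,n-1}\}\le\sigma\le\min_s\{-d_{s,n}/d_{s,n-1}\}$ is non-empty. The two boundary rays are handled separately: the $u$-axis lies in $S$ iff every $d_{i,n-1}\le 0$, the $v$-axis iff every $d_{i,n}\le 0$; a row with $d_{i,n-1}>0,\ d_{i,n}=0$ forces $u=0$, and a row with $d_{i,n-1}=0,\ d_{i,n}>0$ forces $v=0$.

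I would then assemble these pieces: the cone $S$ is trivial exactly when both axes and all interior directions are simultaneously excluded. Excluding the interior happens either because a degenerate row kills it, or because both a row with $d_{s,n-1}>0,\ d_{s,n}<0$ and a row with $d_{t,n-1}<0,\ d_{t,n}>0$ are present and the $\sigma$-window is empty, i.e. $\max_t\{-d_{t,n}/d_{t,n-1}\}>\min_s\{-d_{s,n}/d_{s,n-1}\}$; this possibility, together with the fact that those same two rows already kill the $u$-axis and $v$-axis respectively, is precisely condition (iii). The remaining way to make $S$ trivial is to kill each axis by a degenerate row together with a row carrying the opposite positive coefficient, namely a row with $d_{k,n-1}>0,\ d_{k,n}=0$ together with some row having $d_{m,n}>0$, or the symmetric configuration, which is exactly condition (ii). Checking that each of (i), (ii), (iii) indeed forces $S=\{(0,0)\}$ is the (routine) sufficiency direction.

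The main obstacle is the necessity (exhaustiveness) direction: showing that if none of (i)--(iii) holds then $S$ contains a nonzero point. This amounts to a finite but delicate case analysis over the sign-type multiset of the rows, in which the boundary directions are the subtle part -- one must track when a degenerate row (one coefficient zero, the other positive) is present, when only one of the two mixed-sign types occurs (so that the $\sigma$-window is automatically non-empty and an interior ray survives), and when an entire axis remains feasible. A clean way to organise the bookkeeping is to encode the feasibility of the $u$-axis, of the $v$-axis, and of the interior as three Boolean conditions, and then verify that their disjunction is the exact negation of (i)$\,\vee\,$(ii)$\,\vee\,$(iii).
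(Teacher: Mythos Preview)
Your approach is essentially the same as the paper's. Both arguments reduce the question to the feasibility, for nonzero $(u,v)=(x_{n-1}^2,x_n^2)$ in the closed first quadrant, of the linear system $d_{i,n-1}u+d_{i,n}v\le 0$, and then perform a sign--pattern case analysis on the rows; the paper argues with $|x_{n-1}|$ and $|x_n|$ and derives the sandwich inequality (\ref{3.4}), whereas you parametrise interior directions by $\sigma=u/v$, but the resulting max/min comparison and the treatment of the two boundary rays coincide. Your cone formulation organises the converse (exhaustiveness) bookkeeping more transparently than the paper's list (a)--(e), but no new idea is involved.
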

\begin{proof}(\textit{Necessity})
 It is easy to see that if the conditions (i) and (ii) are satisfied then $x_{n-1}=x_{n}=0$. It means that in these cases the system of equations (\ref{3.3}) has a unique trivial solution.

For the system (\ref{3.3}) to have  a  solution, it must satisfy the next inequality
$$d_{i,n-1}\cdot x^{2}_{n-1}+ d_{i,n}\cdot x^{2}_{n}\leq 0.$$
From this inequality we have
 $$|x_{n-1}|\leq \min_s\{\frac{-d_{s,n}}{d_{s,n-1}} \}\cdot|x_{n}| $$
  and $$ |x_{n-1}|\geq \max_t\{\frac{-d_{t,n}}{d_{t,n-1}} \}\cdot|x_{n}|$$
 for  all $s$ and $ t$ which satisfy the condition $(iii)$. Therefore
 \begin{equation}\label{3.4}
 \max_t\{\frac{-d_{t,n}}{d_{t,n-1}} \}\cdot|x_{n}| \leq |x_{n-1}|\leq \min_s\{\frac{-d_{s,n}}{d_{s,n-1}} \}\cdot|x_{n}|,
 \end{equation}
 this shows that if
 $$ \max_t \{ {\frac{-d_{t,n}}{d_{t,n-1}}}\}>  \min_s \{ {\frac{-d_{s,n}}{d_{s,n-1}}}\}$$
 then the system (\ref{3.3}) has  a unique trivial solution only.\\
(\textit{Sufficiency}). Assume that none of the conditions of the  theorem are  satisfied  then it is enough to consider the following cases:
\begin {itemize}
 \item[(a)] $ \left\{\begin{array}{ll} d_{i,n-1}\leq 0 \\ d_{i,n}\leq 0\end{array} \right. $ for all $i\in {1,...,n-2} $ ;

 \item [(b)] $ \left\{\begin{array}{ll} d_{k,n-1}> 0 \\ d_{k,n}=0\end{array} \right. $ for some $k$ and $d_{i,n}\leq 0 $ for all
  $i\neq k$ ,where $i,k\in \{1,...,n-2\} $;

 \item[(c)] $ \left\{\begin{array}{ll} d_{k,n-1}= 0 \\ d_{k,n}>0\end{array} \right. $ for some $k$ and $d_{i,n-1}\leq 0 $ for all
  $i\neq k$ ,where $i,k\in \{1,...,n-2\} $,

 \item[(d)] $ \left\{\begin{array}{ll} d_{i,n-1}> 0 \\ d_{i,n}<0\end{array} \right. $ or $ \left\{\begin{array}{ll} d_{i,n-1}< 0 \\ d_{i,n}>0\end{array} \right.$
 for all $i\in \{1,...,n-2\}$
 \item[(e)] $  \left\{\begin{array}{ll} d_{s,n-1}> 0 \\ d_{s,n}< 0\end{array} \right. $
  and $  \left\{\begin{array}{ll} d_{t,n-1}< 0 \\ d_{t,n}> 0\end{array} \right. $
  for some $s,t \in \{1,...,n-2 \}$ and
  $$ \max_t \{ {\frac{-d_{t,n}}{d_{t,n-1}}}:d_{t,n-1}< 0,\ d_{t,n}> 0 \} \leq  \min_s \{ {\frac{-d_{s,n}}{d_{s,n-1}}}: d_{s,n-1}> 0 ,\ d_{s,n}< 0 \}.$$
\end {itemize}
 In the cases (a),(b),(c) it is easy to find non-trivial solutions of the system of equations (\ref{3.3}) by selecting  $ x_{n-1}$ and $x_{n} $.

 (d) If $ \left\{\begin{array}{ll} d_{i,n-1}> 0 \\ d_{i,n}<0\end{array} \right.$ for all $i\in \{1,...,n-2\}$ then
  we can choose $ x_{n-1}$ and $x_{n} $  to satisfy  $|x_{n-1}|\leq min \{\frac{-d_{i,n}}{d_{i,n-1}}\}\cdot |x_{n}|$  and if $ \left\{\begin{array}{ll} d_{i,n-1}< 0 \\ d_{i,n}>0\end{array} \right.$ for all $i\in \{1,...,n-2\}$ then  we can select $ x_{n-1}$ and $x_{n} $  to satisfy  $|x_{n-1}|\geq max \{\frac{-d_{i,n}}{d_{i,n-1}}\}\cdot |x_{n}|$   to form non-trivial  solutions of the system (\ref{3.3}).

 (e) If $  \left\{\begin{array}{ll} d_{s,n-1}> 0 \\ d_{s,n}< 0\end{array} \right. $
  and $  \left\{\begin{array}{ll} d_{t,n-1}< 0 \\ d_{t,n}> 0\end{array} \right. $
  for some $s,t \in \{1,...,n-2 \}$ then in order for the system (\ref{3.3}) to have non-trivial solutions the  inequality (\ref{3.4}) must be performed.

  Hence the non-fulfillment of any of the conditions of Theorem \ref{thm2} means that the system (\ref{3.3}) has non-trivial solutions.
\end{proof}

The above theorem holds  for any $n$- dimensional evolution algebra $ E $ over the field $R$. Now we consider the  evolution algebra $ E^{n}_{\pi,\tau }$  over the field $ R$.
\\ Let  $x=\sum\limits_{i}x_{i}e_{i}\in R^{n}$ and $ x^{2}=\sum\limits_{i}x^{2}_{i}e_{i}e_{i}$ , for $ E^{n}_{\pi,\tau }$ we have
   \begin{equation}\label{3.5}
    x^{2}=\sum \limits_{i=1}^{n}x^{2}_{i}(a_{i\pi(i)}\cdot e_{\pi(i)} + a_{i\tau(i)}\cdot e_{\tau(i)}).
    \end{equation}
   Let $j_{k}=\tau^{-1}(\pi(k)),$ then $\sum \limits_{i}a_{i\tau(i)} x^{2}_{i}  e_{\tau(i)}=\sum \limits_{k} a_{j_{k}\pi(k)}x^2_{j_{k}}e_{\pi(k)} $  therefore from (\ref{3.5}) we have
   $$ x^{2}=\sum \limits_{k=1}^{n}e_{\pi(k)}(a_{k\pi(k)}x^2_{k}+ a_{j_{k}\pi(k)}x^2_{j_{k}} ). $$
   Thus the equation $x^{2}=0$ will be
    \begin{equation}\label{3.6}
     a_{k\pi(k)}x^2_{k}+ a_{j_{k}\pi(k)}x^2_{j_{k}}=0 ,\ \  k\in \{1,...,n\}.
    \end{equation}
   We first consider the question when the system (\ref{3.6}) has a unique trivial solution. The following proposition answers this question (but not completely, the full answer will be considered later ). Also in the next proposition for the case $rank(M)=n-1 $,  without loss of generality we can assume that the first $n-1$ rows of $M$ are linearly independent.
\begin{pro}
  Let $M$ be the matrix of structural constants for $ E^{n}_{\pi,\tau }$. The finite dimensional evolution algebra $ E^{n}_{\pi,\tau }$ has the unique absolute nilpotent element $(0,...,0)$
  if one of the following conditions is satisfied.
 \begin {itemize}
 \item [(i)] $det(M)\neq 0$ ;
  \item[(ii)]  $rank(M)=n-1 $ and  $det(M_{i_{0}n}^{T})\cdot  det(M_{n-1})>0$  for some $i_{0}\in \{1,...,n-1\}$;
 \item[ (iii)] For all $k\in \{1,...,n \}, \ a_{k\pi(k)}\cdot a_{j_{k}\pi(k)}>0 $ and in this case $rank(M)$ doesn't matter.
\end {itemize}
\end{pro}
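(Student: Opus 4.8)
The plan is to prove each of the three sufficient conditions separately, exploiting throughout that over the field $R$ one has $x_i^2\ge 0$, with equality precisely when $x_i=0$. Consequently it is enough to show in each case that the only solution of the absolute-nilpotency equations in the squared variables is $x_1^2=\cdots=x_n^2=0$, which automatically yields $x=(0,\dots,0)$. Since the statement asserts only sufficiency, no converse direction is needed, so the three arguments are independent and may be carried out in any order.

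For condition (i), I would set $y_i=x_i^2$ and observe that the system \eqref{3.1} becomes the homogeneous linear system $M^{T}y=0$. As $\det(M^{T})=\det(M)\neq 0$, its unique solution is $y=0$, hence $x=0$; this is essentially the remark already recorded just before Theorem \ref{thm2} and needs no further computation. For condition (ii), where $rank(M)=n-1$ with the first $n-1$ rows independent, I would reduce \eqref{3.1} exactly as in the passage to \eqref{3.2} with $r=n-1$, obtaining $x_i^2=-d_{i,n}\,x_n^2$ for $i=1,\dots,n-1$, where $d_{i,n}=\det(M_{i,n}^{T})/\det(M_{n-1})$. Nonnegativity of $x_i^2$ forces, whenever $x_n\neq 0$, that $d_{i,n}\le 0$ for every $i$. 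The hypothesis $\det(M_{i_0,n}^{T})\cdot\det(M_{n-1})>0$ is exactly the statement $d_{i_0,n}>0$, which contradicts this; hence $x_n=0$, and then $x_i^2=0$ for all $i<n$, so $x=0$. This is the $r=n-1$ specialization of the mechanism behind Theorem \ref{thm2} (equivalently, the case treated in \cite{CasLad2011}).

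For condition (iii), I would instead work directly with the permutation form \eqref{3.6}, namely $a_{k\pi(k)}x_k^2+a_{j_k\pi(k)}x_{j_k}^2=0$ with $j_k=\tau^{-1}(\pi(k))$, so that $a_{j_k\pi(k)}=a_{j_k\tau(j_k)}$ is the $\tau$-coefficient of row $j_k$. Fix $k$. The hypothesis $a_{k\pi(k)}\cdot a_{j_k\pi(k)}>0$ says the two coefficients share a common sign; since $x_k^2,x_{j_k}^2\ge 0$, the two summands are then both $\ge 0$ (common sign positive) or both $\le 0$ (common sign negative), so a vanishing sum forces each summand to vanish. With the coefficients nonzero this gives $x_k^2=x_{j_k}^2=0$, i.e.\ $x_k=0$ (and $x_{j_k}=0$). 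Letting $k$ range over $\{1,\dots,n\}$ yields $x=0$. Note this argument never refers to the rank of $M$, which is why the rank is immaterial in this case.

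I expect the genuine content to be bookkeeping rather than any deep estimate, so the main obstacle is precision of the translation steps. In (ii) one must confirm that the determinant inequality is exactly $d_{i_0,n}>0$, tracking both the definition of $M_{i_0,n}^{T}$ and the sign of $\det(M_{n-1})$. In (iii) the delicate point is the consistent handling of the coupling index $j_k=\tau^{-1}(\pi(k))$, especially in the degenerate situation $\pi(k)=\tau(k)$, where $j_k=k$ and \eqref{3.6} collapses to the single-variable equation $\bigl(a_{k\pi(k)}+a_{k\tau(k)}\bigr)x_k^2=0$; here the product hypothesis still guarantees $a_{k\pi(k)}+a_{k\tau(k)}\neq 0$, so $x_k=0$, and the argument goes through unchanged.
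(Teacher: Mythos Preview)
Your proposal is correct and follows essentially the same route as the paper: the paper simply cites Proposition~4.12 of \cite{CasLad2011} for parts (i) and (ii) and says part (iii) ``comes from (\ref{3.6}) easily,'' while you have written out precisely those arguments in full. Your added bookkeeping (the identification $d_{i_0,n}>0$ in (ii) and the handling of the degenerate case $j_k=k$ in (iii)) is accurate and merely makes explicit what the paper leaves to the reader or to the cited reference.
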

\begin{proof}  The first and second parts of the proposition follow from  Proposition 4.12 in \cite {CasLad2011}.
Proof of part 3 comes from (\ref{3.6}) easily.
\end{proof}
The conditions (ii) and (iii) are various. To see  this difference between conditions we give the  following examples.
\begin{ex}
(For the case (ii) ) Let
 $$ \pi=\left( \begin{array}{cc} 1 \ \ \ 2 \ \ \ 3 \ \ \ 4 \\
 3 \ \ \ 1 \ \ \ 4 \ \ \ 2\end{array} \right),\ \ \tau=\left( \begin{array}{cc} 1 \ \ \ 2 \ \ \ 3 \ \ \ 4 \\
 2 \ \ \ 4 \ \ \ 3 \ \ \ 1\end{array} \right),\ \ \ $$ from the system (\ref{3.6}) we have the following  system of equations
 $$ \left\{\begin{array}{llll} a_{13}x^2_{1}+ a_{33}x^2_{3}=0 \\
a_{21}x^2_{2}+ a_{41}x^2_{4}=0 \\
a_{34}x^2_{3}+ a_{24}x^2_{2}=0  \\
a_{42}x^2_{4}+ a_{12}x^2_{1}=0 \
\end{array} \right. \ \ \ and \ \ \ M = \left( \begin{array}{cccc}   0 \ \ \ \   a_{12} \ \ \  a_{13} \ \ \  0\\
   a_{21} \ \ \ \ \ 0  \ \ \  \ \ 0 \  \ \ \   a_{24} \\
  \ \ 0  \ \ \ \ \ \  0 \ \ \ \ a_{33} \ \ \  a_{34} \\
   a_{41} \ \ \  a_{42} \ \ \ \  0 \ \ \ \  0 \end{array} \right) \   $$
$ det M=a_{12}\cdot a_{24}\cdot a_{33}\cdot a_{41}-a_{13}\cdot a_{21}\cdot a_{34}\cdot a_{42}.$
If $a_{13}=a_{24}=-1$ and the other $a_{ij}=1$, then
$ det M=0$, $rank M=3 $,
 $M_{n-1}=M_{3}= \left( \begin{array}{ccc}  0 \ \ \ \ 1  \ -   1  \\
 1 \ \ \ \  0  \ \ \  \ \ 0   \\
  0 \ \ \ \ 0  \ \ \  \ \ 1    \end{array} \right),$ \
  $M_{i_{0}n}^{T}=M_{14}^{T}=  \left( \begin{array}{ccc}  1 \ \ \  1  \ \ \   0  \\
 1  \ \ \   0  \ \ \   0   \\
 0  \ \ \  0  \ \ \   1   \end{array} \right),$ \ \ \ \
 $ det M_{3} \cdot det M_{14}=(-1) \cdot (-1)=1$ .
 It is easy to see that the system $ \left\{\begin{array}{llll} -x^2_{1}+ x^2_{3}=0 \\
\ \ x^2_{2}+ x^2_{4}=0 \\
\ \ x^2_{3}- x^2_{2}=0  \\
\ \ x^2_{4}+ x^2_{1}=0 \
\end{array} \right.$ has only trivial solution.\\
\end{ex}
\begin{ex} \ (For the case (iii)) Let
 $$ \pi=\left( \begin{array}{cc} 1 \ \ \ 2 \ \ \ 3 \ \ \ 4 \\
 1 \ \ \ 3 \ \ \ 4 \ \ \ 2\end{array} \right),\ \ \tau=\left( \begin{array}{cc} 1 \ \ \ 2 \ \ \ 3 \ \ \ 4 \\
 4 \ \ \ 2 \ \ \ 1 \ \ \ 3\end{array} \right),\ \ \ $$ from the system (\ref{3.6}) we have the following  system of equations
 $$ \left\{\begin{array}{llll} a_{11}x^2_{1}+ a_{31}x^2_{3}=0 \\
a_{23}x^2_{2}+ a_{43}x^2_{4}=0 \\
a_{34}x^2_{3}+ a_{14}x^2_{1}=0  \\
a_{42}x^2_{4}+ a_{22}x^2_{2}=0 \
\end{array} \right.  \ \ \ and \ \ \  M = \left( \begin{array}{cccc} \ a_{11} \ \ \   0 \ \ \ \  0 \ \ \ \ \ a_{14}\\
   0 \ \ \ \  a_{22} \ \  a_{23} \  \ \  0  \\
  \ a_{31} \ \ \   0 \ \ \ \  0 \ \ \ \ \ a_{34}  \\
  0 \ \ \ \  a_{42} \ \   a_{43} \ \ \ 0 \end{array} \right) \   $$
if all $a_{ij}=1$ then $det M =0,\ rank M =2$ and this system has only a trivial solution.
\end{ex}
Now we consider the absolute nilpotent elements of $ E^{n}_{\pi,\tau }$ more clearly.\\
  Let $$ \tau^{-1}=\left( \begin{array}{cc} 1 \,\,\,\ \ \ \ \ \  \ 2 \ \ \  \  \ \,\, ...\ \ \ \ \ \,\,\ n \\
 \tau^{-1}(1) \,  \ \tau^{-1}(2) \, \ \  ... \,\  \tau^{-1}(n)\end{array} \right),\  $$ and
 $$ \tau^{-1}\circ \pi =\left( \begin{array}{cc} 1 \,\,\,\ \ \ \ 2  \ \ \ \ \ \ ...\ \ \ \,\,\,\ n \\
j_{1} \,\ \ \ \  j_{2} \,\  \ \ \  ... \,\ \ \ \ \ \ j_{n}\end{array} \right),\ \ \ \ \ where \ \ j_{k}=\tau^{-1}(\pi(k)).$$
Let $ \tau^{-1}\circ \pi $ be decomposed into product of independent cycles and $(l_{1}\ l_{2}\ ...\ l_{p})$ be one of its  cycles.
Finding the absolute  nilpotent elements of $ E^{n}_{\pi,\tau }$ means solving the system of equations (\ref{3.6}).
 From the system of equations (\ref{3.6}) we have
\begin{equation}\label{3.7}
 a_{l_{k}\pi(l_{k})}x^2_{l_{k}}+ a_{j_{l_{k}}\pi(l_{k})}x^2_{j_{l_{k}}}=0 .
 \end{equation}
 We know that $j_{k}=\tau^{-1}(\pi(k)) $, from this $j_{l_{k}}=\tau^{-1}\circ \pi(l_{k})$. So
   $$l_{k+1}=j_{l_{k}}=\tau^{-1}\circ \pi(l_{k}),\ \ \tau(l_{k+1})=\pi(l_{k})\ \        $$
   after using these equalities the  equation (\ref{3.7}) will be
   $$ a_{l_{k}\pi(l_{k})}x^2_{l_{k}} + a_{l_{k+1}\tau(l_{k+1})}x^2_{l_{k+1}}=0. $$
Since $l_{p+1}=l_{1}$, we form the following system
\begin{equation}\label{3.8}
 \left\{\begin{array}{llll} a_{l_{1}\pi(l_{1})}x^2_{l_{1}} \ \   + \ \    a_{l_{2}\tau(l_{2})}x^2_{l_{2}}=0 \\
a_{l_{2}\pi(l_{2})}x^2_{l_{2}} \ \  +\ \      a_{l_{3}\tau(l_{3})}x^2_{l_{3}}=0 \\
....................................... \\
a_{l_{p-1}\pi(l_{p-1})}x^2_{l_{p-1}} + a_{l_{p}\tau(l_{p})}x^2_{l_{p}}=0 \\
a_{l_{p}\pi(l_{p})}x^2_{l_{p}} \ \   + \ \  a_{l_{1}\tau(l_{1})}x^2_{l_{1}}=0 .\ \
 \end{array} \right.
 \end{equation}
 All equations of the system (\ref{3.6}) with unknowns $x_{l_{1}}, \ x_{l_{2}}, \ ...\ ,x_{l_{p}}$ form the system (\ref{3.8}). Therefore
 the solving of the system (\ref{3.6}) is the   solving of such  systems as (\ref{3.8}) and the number of such systems is equal to the number of independent cycles of $ \tau^{-1}\circ \pi$.

The following theorem  fully describes absolute nilpotent elements of $ E^{n}_{\pi,\tau }$.

  \begin{thm}
    Let $x^{*}=(x^{*}_{1},x^{*}_{2},...,x^{*}_{n})$ be an absolute nilpotent element for $ E^{n}_{\pi,\tau }$ and
  $(l_{1}\ l_{2}\ ...\ l_{p})$ be one of the independent cycles of $\tau^{-1}\circ \pi ,\ (p\leq n).$
    \begin {itemize}
   \item [1)] If $a_{l_{k}\pi(l_{k})}\cdot a_{l_{k+1}\tau (l_{k+1})}\neq 0, $ for all $k=1,2,...,p,$ then $x^{*}_{l_{1}}=x^{*}_{l_{2}}=...=x^{*}_{l_{p}}=0$ or
 $\prod \limits_{i=1}^{p}x^{*}_{l_{i}}\neq 0 $ and in this case:
  \item [(1.a)] If  $a_{l_{k_{0}}\pi(l_{k_{0}})} \cdot a_{l_{k_{0}+1}\tau(l_{k_{0}+1})} >0 $ for some $k_{0}=1,...,p$
 then $x^{*}_{l_{1}}=x^{*}_{l_{2}}=...=x^{*}_{l_{p}}=0$, where $l_{p+1}=l_{1}$;

 \item[(1.b)] If $a_{l_{k}\pi(l_{k})} \cdot a_{l_{k+1}\tau(l_{k+1})} <0 $ for all $k=1,...,p $ and  $(-1)^{p}\prod \limits_{i=1}^{p}a_{l_{i}\pi(l_{i})}\neq \prod \limits_{i=1}^{p}a_{l_{i}\tau(l_{i})} $ then $x^{*}_{l_{1}}=x^{*}_{l_{2}}=...=x^{*}_{l_{p}}=0$;

 \item[(1.c)] If $a_{l_{k}\pi(l_{k})} \cdot a_{l_{k+1}\tau(l_{k+1})} <0 $ for all $k=1,...,p $ and  $(-1)^{p}\prod \limits_{i=1}^{p}a_{l_{i}\pi(l_{i})}= \prod \limits_{i=1}^{p}a_{l_{i}\tau(l_{i})} $ then
$$ |x^{*}_{l_{k}}|=\sqrt{\frac{(-1)^{k-1}a_{l_{1}\pi(l_{1})}a_{l_{2}\pi(l_{2})}\cdot ... \cdot a_{l_{k-1}\pi(l_{k-1})}}{a_{l_{2}\tau (l_{2})}a_{l_{3}\tau (l_{3})}\cdot ... \cdot a_{l_{k}\tau (l_{k})}}} \cdot |x^{*}_{l_{1}}| ,\ \ k=2,...,p,$$
where $x^{*}_{l_{1}}$ is any real number.

 \item[2)]  If  $a_{l_{k_{0}}\pi(l_{k_{0}})}=0$ (or $a_{l_{k_{0}}\tau(l_{k_{0}})}=0$ ) for some $k_{0}$  $( 1 \leq k_{0} \leq p)$  and $a_{l_{k}\tau (l_{k})}\neq 0 $ ( or $a_{l_{k}\pi (l_{k})}\neq 0 $ ) for any $k=1,...,p$  then $x^{*}_{l_{1}}=x^{*}_{l_{2}}=...=x^{*}_{l_{p}}=0$.

 \item[3)] If $a_{l_{k_{0}}\pi(l_{k_{0}})}=0$, $a_{l_{k_{0}}\tau(l_{k_{0}})}=0$ for some $k_{0}$ and  $a_{l_{k}\pi(l_{k})}\neq 0 $ or $a_{l_{k}\tau (l_{k})}\neq 0 $ for any $k\neq k_{0}$  then $x^{*}_{l_{k_{0}}}$ will be any real number and $x^{*}_{l_{k}}=0 $ for all $ k\neq k_{0} , \ k=1,...,p$. In this case the number of free parameters is equal to the number of such $k_{0}$.

 \item[4)]  In all other cases either $x^{*}_{l_{1}}=x^{*}_{l_{2}}=...=x^{*}_{l_{p}}=0$ or some of them are equal to zero and the rest depends on the free parameters (the number of free parameters is greater than or equal to one).
\end {itemize}
\end{thm}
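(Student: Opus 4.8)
The plan is to exploit the reduction, already carried out above, of the equation $x^2=0$ for $E^{n}_{\pi,\tau}$ to the family of \emph{independent} cyclic systems \eqref{3.8}, one for each cycle $(l_1\,l_2\,\dots\,l_p)$ of $\tau^{-1}\circ\pi$. Since distinct cycles involve disjoint sets of coordinates, it suffices to analyse a single system \eqref{3.8} and then assemble the coordinate-wise conclusions. The decisive observation is that, writing $u_k=x_{l_k}^{2}$, the unknowns are constrained by $u_k\ge 0$, so \eqref{3.8} becomes the \emph{cyclic, sign-constrained} linear system
\[
a_{l_k\pi(l_k)}\,u_k+a_{l_{k+1}\tau(l_{k+1})}\,u_{k+1}=0,\qquad u_k\ge 0,\qquad k=1,\dots,p,
\]
with indices read modulo $p$ (so $l_{p+1}=l_1$). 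Set $\alpha_k=a_{l_k\pi(l_k)}$ and $\beta_k=a_{l_k\tau(l_k)}$ for brevity; the whole proof is a case analysis on which $\alpha_k,\beta_k$ vanish and on the signs of the products $\alpha_k\beta_{k+1}$.

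For part (1) I would assume every coefficient occurring in \eqref{3.8} is nonzero, solve the chain recursively as $u_{k}=(-1)^{k-1}\dfrac{\alpha_1\cdots\alpha_{k-1}}{\beta_2\cdots\beta_k}\,u_1$ for $k=2,\dots,p$, and close the cycle through the last equation to obtain the consistency relation $\bigl(1-(-1)^{p}\tfrac{\alpha_1\cdots\alpha_p}{\beta_1\cdots\beta_p}\bigr)u_1=0$. Now I bring in the sign constraint. If some product $\alpha_{k_0}\beta_{k_0+1}>0$, then in the $k_0$-th equation the two terms cannot cancel unless $u_{k_0}=u_{k_0+1}=0$; propagating this zero around the (now invertible) chain forces all $u_k=0$, which is (1.a). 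If instead every $\alpha_k\beta_{k+1}<0$, the consistency coefficient vanishes exactly when $(-1)^{p}\prod_i\alpha_i=\prod_i\beta_i$: when it fails we get $u_1=0$ and hence the trivial solution, which is (1.b); when it holds, $u_1\ge 0$ is free and the recursion yields the stated values of $|x^{*}_{l_k}|$, which is (1.c). Here one must also check that each radicand $(-1)^{k-1}\alpha_1\cdots\alpha_{k-1}/(\beta_2\cdots\beta_k)$ is nonnegative; this follows because the hypothesis $\alpha_j\beta_{j+1}<0$ makes each ratio $\alpha_j/\beta_{j+1}$ negative, so the product of $k-1$ of them contributes exactly the sign $(-1)^{k-1}$ that the outer factor cancels.

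The remaining parts handle degenerate configurations by ``breaking'' the cycle at a vanishing coefficient and propagating zeros. For part (2), if $\alpha_{k_0}=0$ (resp. $\beta_{k_0}=0$) while all $\beta_k$ (resp. all $\alpha_k$) are nonzero, the $k_0$-th equation collapses to $\beta_{k_0+1}u_{k_0+1}=0$, forcing $u_{k_0+1}=0$, and the successive equations then force every $u_k=0$. For part (3), a single index $k_0$ with $\alpha_{k_0}=\beta_{k_0}=0$ makes $u_{k_0}$ disappear from both equations in which it could occur, so $x^{*}_{l_{k_0}}$ is a free real parameter; the two boundary equations $\beta_{k_0+1}u_{k_0+1}=0$ and $\alpha_{k_0-1}u_{k_0-1}=0$ then seed a propagation of zeros along the broken chain, and the hypothesis that each other index carries a nonzero coefficient forces the rest to vanish, with one free parameter per such $k_0$. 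Part (4) is the catch-all: any configuration not covered above leaves at least one sign-admissible two-term relation along the broken chain, producing solutions in which some coordinates vanish and the others are tied to one or more free parameters.

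I expect the main obstacle to be the sign bookkeeping in part (1): the arithmetic of the cyclic product and of the individual radicands must be reconciled with the hard constraint $u_k\ge 0$, and it is exactly this nonnegativity that separates (1.a) from (1.b)--(1.c) and pins down when a free parameter genuinely appears. A secondary difficulty is organising parts (2)--(4) so that the propagation-of-zeros argument is exhaustive and the free-parameter count is correct; the cleanest route is to treat each maximal run of indices between consecutive ``broken'' positions as an independent path and to note that along such a path a two-term equation transmits a zero whenever the coefficient facing the already-zero variable is nonzero.
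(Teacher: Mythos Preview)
Your proposal is correct and follows essentially the same approach as the paper: both reduce to the cyclic system \eqref{3.8}, substitute $u_k=x_{l_k}^2\ge 0$, solve the two-term recursion to get the explicit formula and the closing consistency relation, and then split into the sign cases and the degenerate (vanishing-coefficient) cases with a propagation-of-zeros argument. Your notation $\alpha_k,\beta_k,u_k$ streamlines the bookkeeping, but the logic---including the check that each radicand in (1.c) is nonnegative and the cycle-breaking analysis for parts (2)--(4)---mirrors the paper's proof step for step.
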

  \begin{proof} \
  (1.a)\ If $a_{l_{k}\pi(l_{k})}\cdot a_{l_{k+1}\tau (l_{k+1})}\neq 0 $ for all $k=1,2,...,p $ and $a_{l_{k_{0}}\pi(l_{k_{0}})} \cdot a_{l_{k_{0}+1}\tau(l_{k_{0}+1})} >0 $ for some $k_{0}=1,...,p$ then $ x^{*}_{l_{k_{0}}}=x^{*}_{l_{k_{0}+1}}=0$. In this case  it is easy to see that if
  $ x^{*}_{l_{k_{0}}}=x^{*}_{l_{k_{0}+1}}=0$ then $x^{*}_{l_{1}}=x^{*}_{l_{2}}=...=x^{*}_{l_{p}}=0$.

\

  (1.b),(1.c)\ If  $a_{l_{k}\pi(l_{k})} \cdot a_{l_{k+1}\tau(l_{k+1})} <0$ for all $k=1,...,p $ then from the system (\ref{3.8}) we have
$$ |x_{l_{2}}|=  \sqrt{-\frac{a_{l_{1}\pi(l_{1})}}{a_{l_{2}\tau(l_{2})}}} \cdot |x_{l_{1}}| ,$$
$$ |x_{l_{3}}|=  \sqrt{\frac{a_{l_{1}\pi(l_{1})} \cdot a_{l_{2}\pi(l_{2})}}{a_{l_{2}\tau(l_{2})}a_{l_{3}\tau(l_{3})}}} \cdot |x_{l_{1}}|, $$ $$  ..............................................$$
$$ |x_{l_{p}}|=\sqrt{\frac{(-1)^{p-1}a_{l_{1}\pi(l_{1})}a_{l_{2}\pi(l_{2})}\cdot ... \cdot a_{l_{p-1}\pi(l_{p-1})}}{a_{l_{2}\tau (l_{2})}a_{l_{3}\tau (l_{3})}\cdot ... \cdot a_{l_{p}\tau (l_{p})}}} \cdot |x_{l_{1}}| $$
  \begin{equation}\label{3.9}
 \frac{(-1)^{p-1}a_{l_{1}\pi(l_{1})}a_{l_{2}\pi(l_{2})}\cdot ... \cdot a_{l_{p}\pi(l_{p})} + a_{l_{1}\tau (l_{1})}a_{l_{2}\tau (l_{2})}\cdot ... \cdot a_{l_{p}\tau (l_{p})}}
{a_{l_{2}\tau (l_{2})}a_{l_{3}\tau (l_{3})}\cdot ... \cdot a_{l_{p}\tau (l_{p})}}\cdot x^{2}_{l_{1}}=0.
\end{equation}
Note that $\frac{a_{l_{i}\pi(l_{i})}}{a_{l_{i+1}\tau(l_{i+1})}}<0 $ for $i=1,...,p-1$ and the number of such fractions is  $p-1$. It means that
 \begin{equation}
 \frac{(-1)^{p-1}a_{l_{1}\pi(l_{1})}a_{l_{2}\pi(l_{2})}\cdot ... \cdot a_{l_{p-1}\pi(l_{p-1})}}{a_{l_{2}\tau (l_{2})}a_{l_{3}\tau (l_{3})}\cdot ... \cdot a_{l_{p}\tau (l_{p})}}>0.
 \end{equation}
 From the equation (\ref{3.9}) it follows  that, if
   $$(-1)^{p-1}a_{l_{1}\pi(l_{1})}a_{l_{2}\pi(l_{2})}\cdot ... \cdot a_{l_{p}\pi(l_{p})} + a_{l_{1}\tau (l_{1})}a_{l_{2}\tau (l_{2})}\cdot ... \cdot a_{l_{p}\tau (l_{p})}\neq 0, $$
   i.e $(-1)^{p}\prod \limits_{i=1}^{p}a_{l_{i}\pi(l_{i})}\neq \prod \limits_{i=1}^{p}a_{l_{i}\tau(l_{i})} $  then $x^{*}_{l_{1}}=0$, it follows that  $x^{*}_{l_{1}}=x^{*}_{l_{2}}=...=x^{*}_{l_{p}}=0$. Otherwise,
 if $(-1)^{p}\prod \limits_{i=1}^{p}a_{l_{i}\pi(l_{i})}= \prod \limits_{i=1}^{p}a_{l_{i}\tau(l_{i})} $ then $x^{*}_{l_{1}}$ will be any real number.

\

 (2)  If  $a_{l_{k_{0}}\pi(l_{k_{0}})}=0$ (or $a_{l_{k_{0}}\tau(l_{k_{0}})}=0$ ) for some $k_{0}$  $( 1 \leq k_{0} \leq p)$  and $a_{l_{k}\tau (l_{k})}\neq 0 $ ( or $a_{l_{k}\pi (l_{k})}\neq 0 $ ) for any $k=1,...,p$  then $x^{*}_{l_{k_{0}+1}}=0$ (or $ x^{*}_{l_{k_{0}-1}}=0 $ ). It is known from (3.8) if  $x^{*}_{l_{k_{0}+1}}=0$ (or $ x^{*}_{l_{k_{0}-1}}=0 $ ) then $x^{*}_{l_{1}}=x^{*}_{l_{2}}=...=x^{*}_{l_{p}}=0$.

 \

 (3) If $a_{l_{k_{0}}\pi(l_{k_{0}})}=0$, $a_{l_{k_{0}}\tau(l_{k_{0}})}=0$ for some $k_{0}$ and $a_{l_{k}\pi(l_{k})}\neq 0 $ or $a_{l_{k}\tau (l_{k})}\neq 0 $ for any $k\neq k_{0}$ then it is clearly that $x^{*}_{l_{k_{0}}}$ will be any real number and at least one of $x^{*}_{l_{k_{0}+1}}$ or $ x^{*}_{l_{k_{0}-1}} $ is zero. It follows that
  $x^{*}_{l_{k}}=0 $ for all $ k\neq k_{0} , \ k=1,...,p$.

\

 (4) It is clear that in all other cases the solution of the system (\ref{3.8}) is either trivial or some of them are equal to zero and the rest depends on the free parameters. For that it is enough to consider the next cases :
  \begin {itemize}
 \item [4.1)] Let  $a_{l_{k_{0}}\pi(l_{k_{0}})}=0$, $a_{l_{k_{0}}\tau(l_{k_{0}})}=0$ for some $k_{0}$ and  $a_{l_{k_{1}}\pi(l_{k_{1}})}= 0 $, $a_{l_{k_{2}}\tau (l_{k_{2}})}= 0 $ for some $k_{1}\neq k_{0},k_{2}\neq k_{0}$, in this case:\\
   If  $k_{1}<k_{2}$ and $k_{0}$ doesn't lie between $k_1$ and $k_2$  then $x^{*}_{l_{k_{0}}}$ will be any real number and $x^{*}_{l_{k}}=0 $ for all   $ k\neq k_{0} , \ k=1,...,p$ (solution depends on a free parameter);\\
   If  $  k_{1}<k_{0}<k_{2} $ then  $x^{*}_{l_{k_{0}}} $ will be any real number and $x^{*}_{l_{k}}=0 $ for all $  k_{1}< k <k_{2}, \ k\neq k_{0} $ and  the other $x^{*}_{l_{i}}\ $ depend on a free parameter ( the number of free parameters are greater than or equal to one).\\
   The case $k_{1}>k_{2}$ is similar.
 \item [ 4.2)]  Let  $a_{l_{k}\pi(l_{k)}}=0$, $a_{l_{m}\tau(l_{m})}=0$ for some $k,m=1,...,p ; $   $k<m$, in this case:\\
  If   $a_{l_{k_{0}}\pi(l_{k_{0}})} \cdot a_{l_{k_{0}+1}\tau(l_{k_{0}+1})} >0 $  for some $k_{0} $, which $k_{0}<k $ or $k_{0}>m $ then $x^{*}_{l_{1}}=x^{*}_{l_{2}}=...=x^{*}_{l_{p}}=0$; \\
  If such $k_{0}$  does not exist then $x^{*}_{l_{k+1}}=...=x^{*}_{l_{m-1}}=0 $ and the rest $x^{*}_{l_{i}}\ $  depend on a free parameter.\\
  The case $k>m$ is similar.
  \end {itemize}
\end{proof}

\section{Idempotent elements.}

Now we consider the idempotent elements of $ E^{n}_{\pi,\tau }$.
\begin{defn}
An element $x\in E$ is called $idempotent $ if $x^{2}=x$.
\end{defn}
Such elements of an evolution algebra are especially important, because they are the fixed points of the evolution map $V(x)=x^{2}$, i.e. $V(x)=x$.

 For  $ E^{n}_{\pi,\tau }$ we have $x=\sum\limits_{i}x_{i}e_{i}= \sum\limits_{k}x_{\pi(k)}e_{\pi(k)}$, and the equation $x^{2}=x$ will be (see eq(\ref{3.6}))
\begin{equation}\label{4.1}
a_{k\pi(k)}x^2_{k}+ a_{j_{k}\pi(k)}x^2_{j_{k}}=x_{\pi (k)}, \ \  k \in \{1,...,n\}.
\end{equation}
In  general, the analysis of  solutions of the system (\ref{4.1}) is difficult. Therefore we shall consider some particular solutions of the system (\ref{4.1}):
\begin {itemize}
 \item[ 1)] Trivial solution $(0,...,0);$
 \item[ 2)] Assume that $a_{k\pi(k)} + a_{j_{k}\pi(k)}=d$ for any $k=1,...,n$ , where $d=const,\ d\neq 0,$ then one of the particular solutions
 is $x_{i}=\frac{1}{d}, \ i=1,...,n.$
\end {itemize}

Now we consider the system of equations for $n=2$, then permutations (with $\pi\neq\tau$) will be
 $$
  \pi=\left( \begin{array}{cc} 1  \ \  \ 2 \ \\
 2 \ \ \  1 \ \end{array} \right),\ \ \
    \tau=\left( \begin{array}{cc} 1  \ \  \ 2 \ \\
 1 \ \ \  2 \ \end{array} \right).
 $$
 Then system of equations (\ref{4.1}) will be
 $$\left\{\begin{array}{ll} a_{12}x^{2}_{1}+a_{22}x^{2}_{2}=x_{2} \\

   a_{21}x^{2}_{2}+a_{11}x^{2}_{1}=x_{1}.
   \end{array} \right.$$
 Assume that \ $a_{ij}\neq0 \ ; \ i,j\in\{1,2\}$. For easiness let's denote $a_{12}=a ,\  a_{22}=b ,\ a_{21}=c,\  a_{11}=d $ and $ x_{1}=x,\ x_{2}=y $ then we have \ \ \ \ $$\left\{\begin{array}{ll} ax^{2} + by^{2}=y \\
   dx^{2} + cy^{2}=x
   \end{array} \right.\ \  $$
 from this system we have
  \begin{equation}\label{4.2}
  (bd-ac)^{2}x^{4} - 2b(bd-ac)x^{3}+ (b^{2} + cd)x^{2} - cx = 0
  \end{equation}
 By full analysis of (\ref{4.2}) we get

 \begin{thm}
 Solutions of the equation (\ref{4.2}) are $x=0$ and
\begin {itemize}
\item [1)] If $bd=ac$, $b^{2}+cd\neq 0$ then $ x=\frac{c}{b^{2}+cd}$  .
\item [2)] Let $p=(3cd - b^{2})/3(bd - ac)^{2}, \ q=2(9bcd + b^{3})/27(bd-ac)^{3} - c/(bd - ac)^{2},$
      $\Delta=(q/2)^{2}+ (p/3)^{3}$ if $bd\neq ac$, then
\item [a)] for $\Delta <0$ there exist three real solutions,
\item [b)] for $\Delta >0$ there is one  real solution and two complex conjugate solutions,
\item [c)] for $\Delta =0$ and $p\neq0, q\neq0$  there are two  real solutions,
\item [d)] for $\Delta =0$, and $p=q=0 $  there exist one real solution i.e. $x= 2b/3(bd-ac)$.
\end {itemize}
\end{thm}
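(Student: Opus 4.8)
The plan is to notice first that every term of equation (\ref{4.2}) contains a factor $x$, so the quartic factors as
\begin{equation}
x\left[(bd-ac)^{2}x^{3} - 2b(bd-ac)x^{2}+ (b^{2} + cd)x - c\right]=0 .
\end{equation}
This produces the root $x=0$ at once and reduces the theorem to a study of the cubic factor. From here on everything is the classical theory of one cubic equation, and the case split ($bd=ac$ against $bd\neq ac$) together with the auxiliary quantities $p,q,\Delta$ is exactly what that theory delivers.

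For part 1) I would impose $bd=ac$. Then both the leading coefficient $(bd-ac)^{2}$ and the quadratic coefficient $-2b(bd-ac)$ of the cubic factor vanish, so the factor degenerates to the linear equation $(b^{2}+cd)x-c=0$; when $b^{2}+cd\neq 0$ this has the single root $x=\dfrac{c}{b^{2}+cd}$, as claimed.

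For part 2), where $bd\neq ac$, I would divide the cubic factor by $(bd-ac)^{2}$ to make it monic and then remove the quadratic term by the substitution $x=t+\dfrac{2b}{3(bd-ac)}$. A direct expansion converts the equation into the depressed cubic $t^{3}+pt+q=0$ with $p$ and $q$ precisely the expressions stated in the theorem, whose discriminant is $\Delta=(q/2)^{2}+(p/3)^{3}$. The standard Cardano classification then applies term by term: $\Delta<0$ yields three distinct real roots, $\Delta>0$ yields one real root and a complex conjugate pair, $\Delta=0$ with $(p,q)\neq(0,0)$ yields a real double root and hence two distinct real solutions, and $\Delta=0$ with $p=q=0$ forces the triple root $t=0$, that is $x=\dfrac{2b}{3(bd-ac)}$. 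Undoing the substitution returns each $t$-root to the corresponding $x$-root. The only genuine computation — and the sole place an error could creep in — is checking that this substitution produces exactly the displayed $p$ and $q$; once that is verified, the sign analysis of $\Delta$ is entirely routine.
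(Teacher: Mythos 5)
Your proposal is correct and is precisely the ``full analysis'' the paper alludes to but never writes out: the paper states this theorem with no proof, and the intended argument is exactly yours --- factor out $x$, treat the degenerate case $bd=ac$ where the cubic collapses to a linear equation, and otherwise depress the monic cubic via $x=t+\frac{2b}{3(bd-ac)}$ to obtain $t^{3}+pt+q=0$ with the stated $p$, $q$, $\Delta$, then invoke Cardano's classification. Your computation of $p$ and $q$ checks out against the theorem's formulas, so nothing is missing.
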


\section{ Isomorphism of $ E^{n}_{\pi,\tau }$.}

It is known that any nonidentity permutation of $S_{n}$ can be uniquely expressed (up to the order of the factors ) as a product of disjoint cycles.

\begin{defn} Let $\alpha,\beta \in S_{n}$. Then $\alpha$ and $\beta$ are called conjugate if there exists $\gamma \in S_{n} $ such that  $\gamma \circ \alpha \circ \gamma^{-1}=\beta.$
\end{defn}
\begin{pro}
Let $\delta=(i_1\ i_2\ ...\ i_l)\in S_n$ be a cycle. Then for all $\gamma \in S_n$,
 $\gamma \circ \delta \circ\gamma^{-1}=(\gamma(i_1)\ \gamma(i_2)\ ...\ \gamma(i_l)).$
 \end{pro}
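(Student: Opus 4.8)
The plan is to prove this identity of permutations by verifying that both sides act identically on every element of $\{1,\dots,n\}$, since two maps on a finite set coincide precisely when they agree pointwise. I would set $\sigma=\gamma\circ\delta\circ\gamma^{-1}$ and $\rho=(\gamma(i_1)\ \gamma(i_2)\ \dots\ \gamma(i_l))$, and then show $\sigma(m)=\rho(m)$ for each $m\in\{1,\dots,n\}$.

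First I would split the analysis into two cases according to whether $m$ lies in the set $\{\gamma(i_1),\dots,\gamma(i_l)\}$. In the first case $m=\gamma(i_k)$ for some $k$; then, reading the indices cyclically with the convention $i_{l+1}=i_1$, I compute $\sigma(\gamma(i_k))=\gamma(\delta(\gamma^{-1}(\gamma(i_k))))=\gamma(\delta(i_k))=\gamma(i_{k+1})$, which is exactly $\rho(\gamma(i_k))$ by the definition of the cycle $\rho$. In the second case $m\notin\{\gamma(i_1),\dots,\gamma(i_l)\}$, which is equivalent to $\gamma^{-1}(m)\notin\{i_1,\dots,i_l\}$; since $\delta$ fixes every point outside its support $\{i_1,\dots,i_l\}$, I obtain $\sigma(m)=\gamma(\delta(\gamma^{-1}(m)))=\gamma(\gamma^{-1}(m))=m$, while simultaneously $\rho(m)=m$ because $m$ avoids the support of $\rho$.

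Having matched $\sigma$ and $\rho$ on all points in both cases, I conclude $\sigma=\rho$, which is precisely the claimed formula. There is essentially no hard step here; the only points requiring minor care are the cyclic bookkeeping encoded in the convention $i_{l+1}=i_1$, and the equivalence between ``$m$ lies in the support of $\rho$'' and ``$\gamma^{-1}(m)$ lies in the support of $\delta$,'' which follows immediately from the bijectivity of $\gamma$. Thus the entire argument reduces to the routine pointwise verification above.
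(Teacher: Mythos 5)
Your proof is correct and complete: the pointwise case analysis (points of the form $\gamma(i_k)$ versus points outside $\{\gamma(i_1),\dots,\gamma(i_l)\}$) is exactly the standard argument for this fact. Note that the paper itself states this proposition without any proof, treating it as a known result from elementary permutation group theory, so your verification simply supplies the routine argument the paper omits.
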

For permutations of the form $\pi=(\pi_{11}\ \pi_{12}\ ... \ \pi_{1k_{1}})\circ (\pi_{21}\ \pi_{22}\ ... \ \pi_{2k_{2}})\circ...\circ (\pi_{r1}\ \pi_{r2}\ ...\ \pi_{rk_{r}})  $ it is known the following result and note that we also consider  cycles which the length is equal to one.
 \begin{pro}
 Two permutations are conjugate if and  only if they have the same cycle type, i.e the corresponding numbers $k_1,k_2,...,k_r$  coincided.
\end{pro}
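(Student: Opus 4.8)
The plan is to prove both implications by leveraging the previous proposition, which computes the conjugate of a single cycle as $\gamma \circ (i_1\ \cdots\ i_l) \circ \gamma^{-1}=(\gamma(i_1)\ \cdots\ \gamma(i_l))$. The forward direction will show that conjugation preserves cycle type, and the reverse direction will construct an explicit conjugating permutation out of two decompositions having the same type.

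First I would establish the forward implication. Suppose $\alpha$ and $\beta$ are conjugate, so $\beta = \gamma \circ \alpha \circ \gamma^{-1}$ for some $\gamma \in S_n$. Writing $\alpha$ as a product of disjoint cycles $\alpha = c_1 \circ c_2 \circ \cdots \circ c_r$, I would insert the identity $\gamma^{-1}\circ\gamma$ between consecutive factors to obtain
$$\gamma \circ \alpha \circ \gamma^{-1} = (\gamma \circ c_1 \circ \gamma^{-1}) \circ (\gamma \circ c_2 \circ \gamma^{-1}) \circ \cdots \circ (\gamma \circ c_r \circ \gamma^{-1}).$$
By the previous proposition each factor $\gamma \circ c_i \circ \gamma^{-1}$ is again a cycle of the same length as $c_i$. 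Since $\gamma$ is a bijection, the sets of symbols moved by these conjugated cycles are the images under $\gamma$ of the sets moved by the $c_i$, hence remain pairwise disjoint. Therefore the displayed product is itself a disjoint-cycle decomposition of $\beta$, and $\beta$ has exactly the same multiset of cycle lengths as $\alpha$, i.e. the same cycle type.

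For the reverse implication, suppose $\alpha$ and $\beta$ have the same cycle type. I would align their disjoint-cycle decompositions, recording the fixed points as trivial $1$-cycles so that each of $1,\ldots,n$ appears exactly once, as
$$\alpha = \prod_{i=1}^{r}(a_{i1}\ a_{i2}\ \cdots\ a_{ik_i}), \qquad \beta = \prod_{i=1}^{r}(b_{i1}\ b_{i2}\ \cdots\ b_{ik_i}),$$
where corresponding cycles share the common length $k_i$. I would then define $\gamma \in S_n$ by $\gamma(a_{ij}) = b_{ij}$. Because both decompositions exhaust $\{1,\ldots,n\}$ without repetition, this prescription determines a genuine bijection $\gamma$. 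Applying the previous proposition cycle by cycle gives $\gamma \circ (a_{i1}\ \cdots\ a_{ik_i}) \circ \gamma^{-1} = (b_{i1}\ \cdots\ b_{ik_i})$, and multiplying over $i$ yields $\gamma \circ \alpha \circ \gamma^{-1} = \beta$, so $\alpha$ and $\beta$ are conjugate.

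The routine steps, namely distributing conjugation over a product and reading off the cycle lengths, are immediate consequences of the previous proposition. The one point requiring genuine care is the well-definedness of $\gamma$ in the reverse direction: this is precisely why the fixed points must be listed as $1$-cycles, so that the two decompositions induce an honest bijection of all $n$ symbols rather than only a matching of the nontrivial cycles. I expect this bookkeeping, rather than any substantive argument, to be the main thing to get right.
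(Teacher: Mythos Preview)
Your proof is correct and is the standard argument for this classical fact about the symmetric group. Note, however, that the paper does not actually prove this proposition: it is stated as a known result (introduced with ``it is known the following result'') and is used as background, so there is no paper proof to compare against. Your write-up, which leans on the preceding proposition about conjugating a single cycle, is exactly the expected justification and would serve perfectly well if a proof were required.
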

 Now we consider some particular cases of evolution algebra  $ E^{n}_{\pi,\tau }$. Let $G_\pi$ be set of all conjugate permutations to fixed $\pi$. In the following theorem we consider special type of permutations in $G_\pi$.\\

 \begin{thm}
 Let $ E^{n}_{\pi,\tau }$ be an evolution algebra of permutations with the following conditions:
 \begin {itemize}
\item [ (1)] $a_{i\pi(i)}\cdot a_{i\tau(i)}\neq 0, \ 1\leq i \leq n;$
\item [(2)] $\pi=\pi_{1}\circ \pi_{2}\circ ... \circ \pi_{r}, \ \tau=\tau_{1}\circ \tau_{2}\circ ... \circ\tau_{r},$  where
 $\pi_{1}=(\pi_{11}\ \pi_{12}\ ... \ \pi_{1k_{1}}), $\\
  $ \pi_{2}=(\pi_{21}\ \pi_{22}\ ... \ \pi_{2k_{2}}),\ ... ,\ \pi_{r}=(\pi_{r1}\ \pi_{r2}\ ...\ \pi_{rk_{r}}) $ and $  \tau_{1}=(\tau_{11}\ \tau_{12}\ ... \ \tau_{1k_{1}}),$\\
   $ \tau_{2}=(\tau_{21}\ \tau_{22}\ ...\ \tau_{2k_{2}}), ...,\ \tau_{r}=(\tau_{r1}\ \tau_{r2}\ ...\ \tau_{rk_{r}})$  are  independent  cycles  of   $\pi $ and
   $  \tau  $ respectively,  and $ k_{1}+k_{2}+...+k_{r}=n.$
   \end {itemize}
    If $ \tau_{im}\in \{\pi_{i1}\ \pi_{i2}\ ...\ \pi_{ik_{i}}\},\ 1\leq i \leq r,\ 1\leq m \leq k_{i}\ $
 , i.e.$ \pi_{k}\ $  and $ \tau_{k}$  consist  of  one and  the same  elements, only has
  difference between seats of elements,  \\
 then
\ \ \ \ \ \ \ \ \ \ $$E^{n}_{\pi,\tau }\cong E^{k_{1}}_{\pi_{1},\tau_{1}}\oplus E^{k_{2}}_{\pi_{2},\tau_{2}}\oplus ...\oplus E^{k_{r}}_{\pi_{r},\tau_{r}}. $$
\end{thm}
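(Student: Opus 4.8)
The plan is to exploit hypothesis (2): the requirement that the $i$-th cycles $\pi_i$ and $\tau_i$ be built from the same elements forces both $\pi$ and $\tau$ to preserve one common partition of $\{1,\dots,n\}$, along whose blocks the algebra will split. Write $S_\alpha=\{\pi_{\alpha 1},\dots,\pi_{\alpha k_\alpha}\}$ for the support of $\pi_\alpha$. Since $\tau_{\alpha m}\in\{\pi_{\alpha 1},\dots,\pi_{\alpha k_\alpha}\}$ for each $m$ and $\tau_\alpha$ is itself a cycle of length $k_\alpha$, the support of $\tau_\alpha$ equals $S_\alpha$ as well. As $\pi_1,\dots,\pi_r$ are disjoint and $k_1+\cdots+k_r=n$, the sets $S_1,\dots,S_r$ partition $\{1,\dots,n\}$, and for every $i\in S_\alpha$ one has $\pi(i)=\pi_\alpha(i)\in S_\alpha$ and $\tau(i)=\tau_\alpha(i)\in S_\alpha$.

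First I would put $V_\alpha=\operatorname{span}\{e_i:i\in S_\alpha\}$ and verify that each $V_\alpha$ is a subalgebra. For $i\in S_\alpha$ the defining relation $e_i\cdot e_i=a_{i\pi(i)}e_{\pi(i)}+a_{i\tau(i)}e_{\tau(i)}$ lands in $V_\alpha$, because the previous step places both $\pi(i)$ and $\tau(i)$ in $S_\alpha$; all products $e_i\cdot e_j$ with $i\ne j$ vanish by definition. The same vanishing gives $V_\alpha\cdot V_\beta=0$ for $\alpha\ne\beta$, so each $V_\alpha$ is in fact an ideal. Because the $S_\alpha$ partition the index set, the natural basis decomposes accordingly, so $E^n_{\pi,\tau}=V_1\oplus\cdots\oplus V_r$ as a vector space; together with $V_\alpha\cdot V_\beta=0$ this exhibits $E^n_{\pi,\tau}$ as a direct sum of subalgebras.

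It then remains to identify $V_\alpha$ with $E^{k_\alpha}_{\pi_\alpha,\tau_\alpha}$. I would choose a relabelling bijection $\psi_\alpha\colon\{1,\dots,k_\alpha\}\to S_\alpha$; by the cycle-conjugation formula $\gamma\circ\delta\circ\gamma^{-1}=(\gamma(i_1)\ \dots\ \gamma(i_l))$ recorded above, $\psi_\alpha$ transports the cycles $\pi_\alpha,\tau_\alpha$ on $S_\alpha$ to genuine cycles on $\{1,\dots,k_\alpha\}$, and one declares the structure constants of $E^{k_\alpha}_{\pi_\alpha,\tau_\alpha}$ to be the corresponding $a_{i\pi(i)},a_{i\tau(i)}$. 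The linear isomorphism sending the $m$-th natural basis vector of $E^{k_\alpha}_{\pi_\alpha,\tau_\alpha}$ to $e_{\psi_\alpha(m)}$ takes natural basis to natural basis and, by this choice of constants, respects the squares $e_m\cdot e_m$, hence is an algebra isomorphism. Summing these over $\alpha=1,\dots,r$ yields the claimed $E^n_{\pi,\tau}\cong E^{k_1}_{\pi_1,\tau_1}\oplus\cdots\oplus E^{k_r}_{\pi_r,\tau_r}$.

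The conceptual core is the closure argument: the whole decomposition rests on the support condition guaranteeing $\pi(S_\alpha)=\tau(S_\alpha)=S_\alpha$, which is precisely what confines each $e_i^2$ to its own block. I expect no real obstacle beyond bookkeeping, namely keeping the relabellings $\psi_\alpha$ and their induced identifications of structure constants consistent across all $r$ blocks. It is worth noting that hypothesis (1), $a_{i\pi(i)}\cdot a_{i\tau(i)}\ne0$, plays no role in the splitting itself; it serves only to ensure that within each block every square $e_i^2$ genuinely involves both images $e_{\pi(i)}$ and $e_{\tau(i)}$, so that the summands are honest evolution algebras corresponding to two permutations rather than degenerate ones.
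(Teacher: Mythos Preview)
Your argument is correct and follows essentially the same approach as the paper: both proofs rest on the observation that the common cycle supports $S_\alpha=\{\pi_{\alpha 1},\dots,\pi_{\alpha k_\alpha}\}$ partition $\{1,\dots,n\}$ and are closed under both $\pi$ and $\tau$, so the spans $V_\alpha$ are mutually annihilating subalgebras whose direct sum is $E^n_{\pi,\tau}$, each identified with $E^{k_\alpha}_{\pi_\alpha,\tau_\alpha}$ by relabelling the basis. Your write-up is in fact more explicit than the paper's about the closure step and the ideal structure, and your remark that condition~(1) is inessential to the splitting is a valid observation not made in the original.
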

\begin{proof}
It is easy to see that  $E^{k_{i}}_{\pi_{i},\tau_{i}}$ is $k_{i}$-dimensional evolution algebra of permutations $\pi_{i}$ and $\tau_{i}$ with the basis $ e_{\pi_{im}}\ (or\ e_{\tau_{im}}) $ and the table of multiplications given by
 $$\left\{\begin{array}{ll} e_{\pi_{im}}\cdot e_{\pi_{im}}=a_{\pi_{im}\pi(\pi_{im})}\cdot e_{\pi(\pi_{im})} + a_{\pi_{im}\tau(\pi_{im})}\cdot e_{\tau(\pi_{im})} \\
e_{\pi_{im}}\cdot e_{\pi_{ik}}=0,\ \ m\neq k
   \end{array} \right.\ \  $$
and $\pi(\pi_{im})=\pi_{i,m+1}\in \{\pi_{i1},\pi_{i2},...,\pi_{ik_{i}}\},\ \tau(\pi_{im})=\tau(\tau_{is})\in \{\tau_{i1},\tau_{i2},...,\tau_{ik_{i}}\},\\  1\leq m,s \leq k_{i}.$

The isomorphism is provided by the following change of basis $$e_{i,m}=e_{\pi_{im}}( \ or \ e_{i,m}=e_{\tau_{im}} ). $$
Thus we have the evolution algebra $ E^{k_{i}}_{\pi_{i},\tau_{i}}$ with the basis $e_{i,m},\ 1\leq i \leq r,\ 1\leq m \leq k_{i} $ .

If $i\neq j$ and $m\neq k$ then $\pi_{im}\neq \pi_{jm}$ and  $\pi_{im}\neq \pi_{ik}$  for any $1\leq i,j \leq r,\ 1\leq m,k \leq k_{i} $,
it means
$$ \{ \pi_{i1},\pi_{i2},...,\pi_{ik_{i}} \} \cap \{ \pi_{j1},\pi_{j2},...,\pi_{jk_{j}} \}=\emptyset  .$$ and
$$ \{\pi_{11}\ \pi_{12}\ ... \ \pi_{1k_{1}}\}\cup \{\pi_{21}\ \pi_{22}\ ... \ \pi_{2k_{2}}\}\cup ...\cup \{\pi_{r1}\ \pi_{r2}\ ...\ \pi_{rk_{r}}\}=\{1,2,3,...,n \} .  $$
and we have
$$E^{n}_{\pi,\tau }\cong E^{k_{1}}_{\pi_{1},\tau_{1}}\oplus E^{k_{2}}_{\pi_{2},\tau_{2}}\oplus ...\oplus E^{k_{r}}_{\pi_{r},\tau_{r}}. $$
\end{proof}
Let $ \tau_{0}=(1)(2) ... (n)$ be identity permutation.

\begin{pro}\label{p4}
 Any evolution algebra of permutations $E^{n}_{\pi,\tau_{0} }$ with permutations
$$\pi=(k_{1},k_{2},...,k_{n}),\   and \  a_{i\pi(i)}\cdot a_{ii}\neq 0, \ 1\leq i \leq n $$
is isomorphic to the evolution algebra  $E^{n}_{\pi_{\ast},\tau_{0}} $, with the table of multiplications given by:
$$ \left\{\begin{array}{ll} e'_{i}\cdot e'_{i}= a_{\pi ^{i-1}(1)\pi^{i}(1)}e'_{i+1} + a_{\pi ^{i-1}(1)\pi^{i-1}(1)}e'_{i}, \ \ 1\leq i \leq n \\
   e'_{i}\cdot e'_{j}=0,\ \ i\neq j
   \end{array} \right.\ \  $$
   where  $\pi_{\ast}=(1\ 2\ ...\ n), \ $ and $a_{\pi ^{i-1}(1)\pi^{i}(1)}\cdot a_{\pi ^{i-1}(1)\pi^{i-1}(1)}\neq 0 , \ \pi^{0}(1)=1 $
\end{pro}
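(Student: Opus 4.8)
The plan is to realize the claimed isomorphism as an explicit relabeling of the natural basis coming from the orbit of the point $1$ under $\pi$. Since $\pi=(k_{1}\ k_{2}\ \ldots\ k_{n})$ is a single $n$-cycle, this orbit is all of $\{1,\ldots,n\}$: the elements $1,\pi(1),\pi^{2}(1),\ldots,\pi^{n-1}(1)$ are pairwise distinct and $\pi^{n}(1)=1$. First I would record this, since it guarantees that $\gamma\colon \pi^{i-1}(1)\mapsto i$ is a well-defined bijection of $\{1,\ldots,n\}$, and that $\gamma\circ\pi\circ\gamma^{-1}=\pi_{*}$ by the conjugation formula for cycles stated above (indeed $\gamma(\pi^{i-1}(1))=i$ and $\gamma(\pi(\pi^{i-1}(1)))=\gamma(\pi^{i}(1))=i+1=\pi_{*}(i)$). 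Since $\gamma\circ\tau_{0}\circ\gamma^{-1}=\tau_{0}$ as well, $\gamma$ is the natural candidate to carry $E^{n}_{\pi,\tau_{0}}$ to $E^{n}_{\pi_{*},\tau_{0}}$.

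Next I would define the linear map $\phi\colon E^{n}_{\pi,\tau_{0}}\to E^{n}_{\pi_{*},\tau_{0}}$ on natural bases by $\phi(e_{\pi^{i-1}(1)})=e'_{i}$ for $1\le i\le n$ (with $\pi^{0}(1)=1$), which is a linear isomorphism because $\gamma$ is a bijection. Both sides being evolution algebras with respect to these bases, $\phi$ is an algebra homomorphism as soon as it respects the squares $e_{m}\cdot e_{m}$ and annihilates the cross products; the latter is immediate, since for $m\ne m'$ one has $\phi(e_{m})\phi(e_{m'})=e'_{\gamma(m)}e'_{\gamma(m')}=0$ exactly because $\gamma(m)\ne\gamma(m')$, matching $\phi(e_{m}e_{m'})=\phi(0)=0$.

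The heart of the argument is the single square computation. Writing $m=\pi^{i-1}(1)$ and using $\tau_{0}(m)=m$ together with $\pi(m)=\pi^{i}(1)$, in $E^{n}_{\pi,\tau_{0}}$ we have
$$e_{m}\cdot e_{m}=a_{m\pi(m)}\,e_{\pi(m)}+a_{mm}\,e_{m}=a_{\pi^{i-1}(1)\pi^{i}(1)}\,e_{\pi^{i}(1)}+a_{\pi^{i-1}(1)\pi^{i-1}(1)}\,e_{\pi^{i-1}(1)}.$$
Applying $\phi$ and using $\phi(e_{\pi^{i}(1)})=e'_{i+1}$ (with the convention $e'_{n+1}=e'_{1}$, justified by $\pi^{n}(1)=1$) and $\phi(e_{\pi^{i-1}(1)})=e'_{i}$ yields
$$\phi(e_{m}\cdot e_{m})=a_{\pi^{i-1}(1)\pi^{i}(1)}\,e'_{i+1}+a_{\pi^{i-1}(1)\pi^{i-1}(1)}\,e'_{i},$$
which is precisely the product $e'_{i}\cdot e'_{i}$ prescribed by the displayed multiplication table of $E^{n}_{\pi_{*},\tau_{0}}$. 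Thus $\phi(e_{m}\cdot e_{m})=\phi(e_{m})\cdot\phi(e_{m})$, and $\phi$ is an isomorphism of evolution algebras. The non-vanishing condition $a_{\pi^{i-1}(1)\pi^{i}(1)}\cdot a_{\pi^{i-1}(1)\pi^{i-1}(1)}\ne0$ is nothing but the hypothesis $a_{i\pi(i)}\cdot a_{ii}\ne0$ evaluated at the index $i=\pi^{i-1}(1)$.

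I expect the only genuine obstacle to be index bookkeeping rather than any conceptual difficulty: one must keep the cyclic arithmetic consistent, in particular confirm the wrap-around $\pi^{n}(1)=1$ that closes the cycle at $i=n$ and check that $\gamma$ is exactly the conjugating permutation sending $\pi$ to $\pi_{*}$. Once bijectivity of $\gamma$ and the single square computation are settled, multiplicativity on all basis products — and hence the asserted isomorphism — follows at once.
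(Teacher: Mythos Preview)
Your proof is correct and follows essentially the same approach as the paper: both establish the isomorphism via the basis relabeling $e'_{i}=e_{\pi^{i-1}(1)}$ and verify it by the single square computation $e'_{i}\cdot e'_{i}=a_{\pi^{i-1}(1)\pi^{i}(1)}e'_{i+1}+a_{\pi^{i-1}(1)\pi^{i-1}(1)}e'_{i}$. Your version is simply more explicit about bijectivity, the vanishing of cross products, and the cyclic wrap-around at $i=n$, all of which the paper leaves implicit.
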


\begin{proof}. The isomorphism is established by basis permutations:
$$ e'_{1}=e_{1}, \ e'_{i}=e_{\pi^{i-1}(1)}, \ \ 2\leq i \leq n. $$
Indeed, according to $\tau_{1}(i)=i $ we get

$ e'_{i}\cdot e'_{i}=e_{\pi^{i-1}(1)}\cdot e_{\pi^{i-1}(1)}=a_{\pi ^{i-1}(1)\pi^{i}(1)} \cdot e_{\pi^{i}(1)} + a_{\pi ^{i-1}(1)\tau ( \pi^{i-1}(1))}\cdot e_{\tau ( \pi^{i-1}(1))}= a_{\pi ^{i-1}(1)\pi^{i}(1)} \cdot e_{\pi^{i}(1)} + a_{\pi ^{i-1}(1) \pi^{i-1}(1)}\cdot e_{ \pi^{i-1}(1)}=a_{\pi ^{i-1}(1)\pi^{i}(1)}e'_{i+1} + a_{\pi ^{i-1}(1)\pi^{i-1}(1)}e'_{i}.$
\end{proof}
\

Let's present the next example.

\begin{ex}. Consider the following evolution algebra $A^{n}_{\pi_{\ast},\tau_{0}} $, with the table of multiplications given by:
$$ \left\{\begin{array}{ll} \eta_{i}\cdot \eta_{i}= \eta_{i+1} + \eta_{i}, \ \ 1\leq i \leq n-1 \\
\eta_{n} \cdot \eta_{n}= \eta_{1} + \eta_{n},\\
   \eta_{i}\cdot \eta_{j}=0,\ \ i\neq j
   \end{array} \right.\ \  $$
the algebra  $ A^{n}_{\pi_{\ast},\tau_{0}} $ is evolution algebra of permutations  $\pi_{\ast}=(1\ 2\ ...\ n), \ \tau_{0}$ and all non-zero structural constants $a_{ij}$ are equal to one.
\end{ex}

\begin{thm}
 Any evolution algebra of permutations $E^{n}_{\pi,\tau_{0} }$ with permutations \\
$ \pi=(k_{1},k_{2},...,k_{n}), \tau_{0}=(1)(2) ... (n) $ and condition
 $$ a_{i\pi(i)}\cdot a_{ii}\neq 0, ( a_{\pi^{i-1}(1)\pi^{i-1}(1)})^{2}=a_{\pi^{i-1}(1)\pi^{i}(1)}a_{\pi^{i}(1)\pi^{i}(1)}, \ 1\leq i \leq n $$
 is isomorphic to the algebra $A^{n}_{\pi_{\ast},\tau_{0}} $.
\end{thm}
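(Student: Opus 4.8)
The plan is to reduce to the normal form of Proposition \ref{p4} and then realize the isomorphism by a purely diagonal rescaling of the basis. First I would apply Proposition \ref{p4}: since $\pi=(k_1,\ldots,k_n)$ is an $n$-cycle and $a_{i\pi(i)}\cdot a_{ii}\neq 0$, the algebra $E^n_{\pi,\tau_0}$ is isomorphic to $E^n_{\pi_*,\tau_0}$ with $\pi_*=(1\ 2\ \ldots\ n)$ and
$$e'_i\cdot e'_i=a_{\pi^{i-1}(1)\pi^i(1)}\,e'_{i+1}+a_{\pi^{i-1}(1)\pi^{i-1}(1)}\,e'_i,\qquad e'_i\cdot e'_j=0\ (i\neq j),$$
where indices are read cyclically, $e'_{n+1}=e'_1$, and $\pi^n(1)=1$. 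To lighten the notation I write $b_i=a_{\pi^{i-1}(1)\pi^i(1)}$ and $c_i=a_{\pi^{i-1}(1)\pi^{i-1}(1)}$, so that $e'_i\cdot e'_i=b_i e'_{i+1}+c_i e'_i$. The hypothesis $a_{i\pi(i)}a_{ii}\neq 0$ guarantees $b_i\neq 0$ and $c_i\neq 0$ for every $i$, while the second hypothesis reads exactly $c_i^2=b_i c_{i+1}$, with the convention $c_{n+1}=a_{\pi^n(1)\pi^n(1)}=a_{11}=c_1$ since $\pi^n(1)=1$.

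Next I would seek an isomorphism onto $A^n_{\pi_*,\tau_0}$ of the simplest possible shape, namely a diagonal change of basis $\eta_i=\lambda_i e'_i$ with nonzero scalars $\lambda_i$. Expanding and substituting $e'_{i+1}=\eta_{i+1}/\lambda_{i+1}$, $e'_i=\eta_i/\lambda_i$, one gets $\eta_i\cdot\eta_i=\lambda_i^2(b_i e'_{i+1}+c_i e'_i)=(\lambda_i^2 b_i/\lambda_{i+1})\,\eta_{i+1}+(\lambda_i c_i)\,\eta_i$, while $\eta_i\cdot\eta_j=0$ for $i\neq j$ automatically. Hence these $\eta_i$ obey the defining relations $\eta_i\cdot\eta_i=\eta_{i+1}+\eta_i$ of $A^n_{\pi_*,\tau_0}$ precisely when
$$\lambda_i c_i=1\quad\text{and}\quad \lambda_i^2 b_i=\lambda_{i+1},\qquad i=1,\ldots,n.$$
The first family of equations forces the unique candidate $\lambda_i=1/c_i$, which is well defined because $c_i\neq 0$.

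The key step, and the only place the delicate hypothesis is used, is checking that this single choice $\lambda_i=1/c_i$ simultaneously satisfies the second family. Substituting gives $\lambda_i^2 b_i=b_i/c_i^2$, and this equals $\lambda_{i+1}=1/c_{i+1}$ exactly when $c_i^2=b_i c_{i+1}$, which is the assumed identity. I expect the only genuine subtlety to be the cyclic closure: the relation must also hold for $i=n$, where it reads $c_n^2=b_n c_1$; this is covered because $\pi^n(1)=1$ makes $c_{n+1}=a_{11}=c_1$, so the wrap-around of the cycle creates no obstruction. With all $n$ compatibility conditions verified, the diagonal change of basis $\eta_i=(1/c_i)\,e'_i$ transforms the structure constants of $E^n_{\pi_*,\tau_0}$ into those of $A^n_{\pi_*,\tau_0}$, yielding $E^n_{\pi,\tau_0}\cong A^n_{\pi_*,\tau_0}$ and completing the proof.
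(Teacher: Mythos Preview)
Your proof is correct and follows essentially the same approach as the paper: reduce to $E^n_{\pi_*,\tau_0}$ via Proposition~\ref{p4}, then find a diagonal change of basis matching the structure constants of $A^n_{\pi_*,\tau_0}$. The paper writes the map in the direction $f(e'_i)=x_i\eta_i$ and solves for $x_i=c_i$, whereas you set $\eta_i=\lambda_i e'_i$ and obtain $\lambda_i=1/c_i$; these are inverse to one another, and your explicit treatment of the cyclic closure at $i=n$ is a welcome clarification.
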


\begin{proof}
 According to Proposition \ref{p4} it is sufficient to establish isomorphism between evolution algebra $E^{n}_{\pi_{\ast},\tau_{0}} $
 and evolution algebra $A^{n}_{\pi_{\ast},\tau_{0}} $ with  permutations  $\pi_{\ast}=(1\ 2\ ...\ n), \ \tau_{0}$.
 Let the map $f:E^{n}_{\pi_{\ast},\tau_{0}}\rightarrow  A^{n}_{\pi_{\ast},\tau_{0}} $ defined by $f(e'_i)=x_{i}\eta_i$ be isomorphism then
 $$ f(e'_i)f(e'_i)=x^{2}_i \eta_i \eta_i=x^{2}_i(\eta_{i+1} + \eta_i)=x^{2}_i \eta_{i+1}+x^{2}_i \eta_i,$$
 \[f(e'_ie'_i)=f(a_{\pi ^{i-1}(1)\pi^{i}(1)} e'_{i+1} + a_{\pi ^{i-1}(1)\pi^{i-1}(1)}e'_{i}) = \]
 \[=a_{\pi ^{i-1}(1)\pi^{i}(1)}f(e'_{i+1}) +
  a_{\pi ^{i-1}(1)\pi^{i-1}(1)}f(e'_{i})= a_{\pi ^{i-1}(1)\pi^{i}(1)} x_{i+1} \eta_{i+1} + a_{\pi ^{i-1}(1)\pi^{i-1}(1)}x_i \eta_i. \]
 From $ f(e'_i)f(e'_i)=f(e'_ie'_i) $ we have the next system
$$ \left\{\begin{array}{ll} x^{2}_{i}=a_{\pi ^{i-1}(1)\pi^{i}(1)}x_{i+1}\\
x^{2}_{i}=a_{\pi ^{i-1}(1)\pi^{i-1}(1)}x_i
   \end{array} \right.\ \  $$
   $x_i\neq 0$, hence $x_i=a_{\pi ^{i-1}(1)\pi^{i-1}(1)} $ and the equality
    $$( a_{\pi^{i-1}(1)\pi^{i-1}(1)})^{2}=a_{\pi^{i-1}(1)\pi^{i}(1)}a_{\pi^{i}(1)\pi^{i}(1)} $$  is hold  for all  $\ 1\leq i \leq n $.
It means that $ f(e'_i)=a_{\pi ^{i-1}(1)\pi^{i-1}(1)}\eta_i $ is isomorphism.
\end{proof}
\

Let $\alpha,\beta\in S_n$ are conjugated, then there exists $\gamma \in S_{n} $ such that  $\gamma \circ \alpha = \beta \circ\gamma$.

\begin{thm}
 If permutations $\alpha,\beta\in S_n$ are conjugated and $a_{i\alpha(i)}=a_{\gamma(i)\beta(\gamma(i))},\ a_{ii}=a_{\gamma(i)\gamma(i)}, \ 1\leq i\leq n$ then evolution algebras $E^{n}_{\alpha,\tau_0 }$  and  $E^{n}_{\beta,\tau_0 }$  are isomorphic.
 \end{thm}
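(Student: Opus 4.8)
The plan is to realize the isomorphism directly as the basis permutation induced by $\gamma$, in the same spirit as Proposition \ref{p4} and the theorems preceding it. Write $\{e_i\}_{i=1}^n$ for the natural basis of $E^{n}_{\alpha,\tau_0}$ and $\{e'_i\}_{i=1}^n$ for the natural basis of $E^{n}_{\beta,\tau_0}$, and recall that, since $\tau_0$ is the identity, the diagonal products read
$$e_i\cdot e_i = a_{i\alpha(i)}\,e_{\alpha(i)} + a_{ii}\,e_i, \qquad e'_k\cdot e'_k = a_{k\beta(k)}\,e'_{\beta(k)} + a_{kk}\,e'_k,$$
with all products of distinct basis vectors equal to zero. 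First I would define $\varphi: E^{n}_{\alpha,\tau_0}\to E^{n}_{\beta,\tau_0}$ by $\varphi(e_i)=e'_{\gamma(i)}$ and extend it linearly. Because $\gamma$ is a permutation, $\varphi$ carries a basis bijectively onto a basis, so it is automatically a linear isomorphism; it remains only to check that $\varphi$ is multiplicative.

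For distinct indices $i\neq j$, injectivity of $\gamma$ gives $\gamma(i)\neq\gamma(j)$, whence $\varphi(e_i)\varphi(e_j)=e'_{\gamma(i)}e'_{\gamma(j)}=0=\varphi(e_i e_j)$, so the off-diagonal relations are preserved for free. The content of the argument is therefore the diagonal case. Applying $\varphi$ to the first displayed product and using linearity gives
$$\varphi(e_i\cdot e_i)=a_{i\alpha(i)}\,e'_{\gamma(\alpha(i))}+a_{ii}\,e'_{\gamma(i)},$$
while the definition of multiplication in $E^{n}_{\beta,\tau_0}$ gives
$$\varphi(e_i)\cdot\varphi(e_i)=e'_{\gamma(i)}\cdot e'_{\gamma(i)}=a_{\gamma(i)\beta(\gamma(i))}\,e'_{\beta(\gamma(i))}+a_{\gamma(i)\gamma(i)}\,e'_{\gamma(i)}.$$

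The key step is to invoke the conjugacy hypothesis in the form $\gamma\circ\alpha=\beta\circ\gamma$, noted just before the statement, which yields the index identity $\gamma(\alpha(i))=\beta(\gamma(i))$; consequently the two expressions above are supported on the very same pair of basis vectors $e'_{\gamma(\alpha(i))}=e'_{\beta(\gamma(i))}$ and $e'_{\gamma(i)}$. It then remains to match coefficients term by term: the hypothesis $a_{i\alpha(i)}=a_{\gamma(i)\beta(\gamma(i))}$ handles the first summand and $a_{ii}=a_{\gamma(i)\gamma(i)}$ handles the second, so $\varphi(e_i\cdot e_i)=\varphi(e_i)\cdot\varphi(e_i)$, and $\varphi$ is an algebra homomorphism, hence an isomorphism. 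I do not anticipate any genuine obstacle here: the whole proof reduces to transporting the multiplication along the basis bijection $\gamma$, and the only point requiring care is keeping the index bookkeeping consistent, in particular using the conjugacy relation in the correct direction $\gamma\circ\alpha=\beta\circ\gamma$ so that $\gamma(\alpha(i))$ and $\beta(\gamma(i))$ are identified rather than confused.
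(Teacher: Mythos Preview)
Your proof is correct and follows essentially the same approach as the paper: define the map by $e_i\mapsto e_{\gamma(i)}$, compute $\varphi(e_i e_i)$ and $\varphi(e_i)\varphi(e_i)$ directly, and compare using $\gamma\circ\alpha=\beta\circ\gamma$ together with the coefficient hypotheses. If anything, your write-up is slightly more thorough, since you explicitly verify bijectivity and the off-diagonal relations, which the paper leaves implicit.
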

\begin{proof}
 The map  $f:E^{n}_{\alpha,\tau_0 }\rightarrow E^{n}_{\beta,\tau_0 } $ defined by  $f(e_i)=e_{\gamma(i)}$ is isomorphism. Indeed
$$f(e_ie_i)=f(a_{i\alpha(i)}e_{\alpha(i)}+ a_{i\tau_0(i)}e_{\tau_0(i)})= f(a_{i\alpha(i)}e_{\alpha(i)}+ a_{ii}e_{i})=$$ 
 $$ =a_{i\alpha(i)}f(e_{\alpha(i)})+a_{ii}f(e_{i})=  a_{i\alpha(i)}e_{\gamma(\alpha(i))}+a_{ii}e_{\gamma(i)}, $$ 
$$ f(e_i)f(e_i)= e_{\gamma(i)}e_{\gamma(i)}=a_{\gamma(i)\beta(\gamma(i))}e_{\beta(\gamma(i))}+ a_{\gamma(i)\tau_0(\gamma(i))}e_{\tau_0(\gamma(i))}= $$
$$ =a_{\gamma(i)\beta(\gamma(i))}e_{\beta(\gamma(i))}+ a_{\gamma(i)\gamma(i)}e_{\gamma(i)}. $$
\end{proof}

\begin{pro}
 Any evolution algebra of permutations $E^{n}_{\pi,\tau }$ with permutations
$\pi=(k_{1},k_{2},...,k_{n}),$ $ \tau=(l_{1},l_{2},...,l_{n})$ and $\tau \circ \pi =(1)(2) ... (n)$ is isomorphic to the evolution algebra $E^{n}_{\pi_{\ast},\tau_{\ast}} $,
 which a table of multiplications given by:
  $$ \left\{\begin{array}{ll} e^{'}_{i}\cdot e^{'}_{i}= a_{\pi ^{i-1}(1)\pi^{i}(1)}e^{'}_{i+1} + a_{\pi ^{i-1}(1)\pi^{i-2}(1)}e^{'}_{i-1}, \ \ 1\leq i \leq n \\
   e^{'}_{i}\cdot e^{'}_{j}=0,\ \ i\neq j
   \end{array} \right.\ \  $$
 with permutations $\pi_{\ast}=(1,2,...,n), \ \tau_{\ast}=(1,n,n-1,...,2)$ and $a_{\pi ^{i-1}(1)\pi^{i}(1)}\cdot a_{\pi ^{i-1}(1)\pi^{i-2}(1)}\neq 0. $
\end{pro}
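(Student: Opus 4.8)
The plan is to reuse the relabeling strategy of Proposition \ref{p4}, now with a genuinely nontrivial second permutation. First I would note that the hypothesis $\tau\circ\pi=(1)(2)\dots(n)$ says exactly that $\tau=\pi^{-1}$, so in $E^n_{\pi,\tau}$ the multiplication becomes
$$e_i\cdot e_i=a_{i\pi(i)}e_{\pi(i)}+a_{i\pi^{-1}(i)}e_{\pi^{-1}(i)},\qquad e_i\cdot e_j=0\ \ (i\neq j).$$
Since $\pi=(k_1\,k_2\,\dots\,k_n)$ is a single $n$-cycle, the orbit $\{1,\pi(1),\pi^2(1),\dots,\pi^{n-1}(1)\}$ is all of $\{1,\dots,n\}$, so the assignment $e'_1=e_1$, $e'_i=e_{\pi^{i-1}(1)}$ for $2\leq i\leq n$ permutes the natural basis and hence extends to a linear isomorphism $f\colon E^n_{\pi,\tau}\to E^n_{\pi_{\ast},\tau_{\ast}}$. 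This is the same change of basis as in Proposition \ref{p4}.

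Next I would compute $e'_i\cdot e'_i$ directly from the rule above. Writing $m=\pi^{i-1}(1)$ and using $\pi(\pi^{i-1}(1))=\pi^i(1)$ together with $\tau(\pi^{i-1}(1))=\pi^{-1}(\pi^{i-1}(1))=\pi^{i-2}(1)$ gives
$$e'_i\cdot e'_i=a_{\pi^{i-1}(1)\,\pi^i(1)}e_{\pi^i(1)}+a_{\pi^{i-1}(1)\,\pi^{i-2}(1)}e_{\pi^{i-2}(1)}.$$
Re-expressing the two basis vectors through the relabeling as $e_{\pi^i(1)}=e'_{i+1}$ and $e_{\pi^{i-2}(1)}=e'_{i-1}$, with indices read cyclically modulo $n$ (so that $e'_{n+1}=e'_1$ and $e'_0=e'_n$), reproduces precisely the asserted table, while $e'_i\cdot e'_j=0$ for $i\neq j$ is inherited from $e_{\pi^{i-1}(1)}\cdot e_{\pi^{j-1}(1)}=0$.

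Finally I would observe that under the index relabeling $\pi^{i-1}(1)\mapsto i$ the cycle $\pi$ is carried to $\pi_{\ast}=(1\,2\,\dots\,n)$, so that $\tau=\pi^{-1}$ is carried to $\pi_{\ast}^{-1}=(1\,n\,n-1\,\dots\,2)=\tau_{\ast}$; the stated nonvanishing condition $a_{\pi^{i-1}(1)\,\pi^i(1)}\cdot a_{\pi^{i-1}(1)\,\pi^{i-2}(1)}\neq0$ is merely the transcription of the structural constants of $E^n_{\pi,\tau}$ into the new basis. The computation is essentially bookkeeping, so I do not anticipate a serious obstacle; the only point requiring care is the cyclic reading of indices at the two seams $i=1$ and $i=n$. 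There one uses $\pi^n(1)=1$ and $\tau(1)=\pi^{-1}(1)=\pi^{n-1}(1)$ to check that the first and last rows of the table close up correctly, matching $\pi_{\ast}$ and $\tau_{\ast}$ at their wrap-around. Confirming these boundary cases is where I would concentrate the verification.
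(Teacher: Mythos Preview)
Your proposal is correct and follows essentially the same approach as the paper: the same change of basis $e'_i=e_{\pi^{i-1}(1)}$ is used, and the key step in both is the observation that $\tau\circ\pi=\mathrm{id}$ forces $\tau(\pi^{i-1}(1))=\pi^{i-2}(1)$, after which the multiplication table follows by direct substitution. Your treatment is slightly more detailed (explicitly identifying $\tau=\pi^{-1}$, checking the cyclic boundary cases, and explaining why $\pi_\ast,\tau_\ast$ arise), but the argument is the same.
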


\begin{proof}. Similarly to the proof of Proposition \ref{p4} taking the change of basis
$$ e^{'}_{1}=e_{1}, \ e^{'}_{i}=e_{\pi^{i-1}(1)}, \ \ 2\leq i \leq n. $$ and we have $\tau \circ \pi=(1)(2) ... (n)$ it means that
$\tau(\pi (k))=k,\ \tau(\pi^{i+1}(1))=\tau(\pi(\pi^{i}(1)))=\pi^{i}(1)$ thus we have

$ e^{'}_{i}\cdot e^{'}_{i}=e_{\pi^{i-1}(1)}\cdot e_{\pi^{i-1}(1)}=a_{\pi ^{i-1}(1)\pi^{i}(1)} \cdot e_{\pi^{i}(1)} + a_{\pi ^{i-1}(1)\tau ( \pi^{i-1}(1))}\cdot e_{\tau ( \pi^{i-1}(1))}= a_{\pi ^{i-1}(1)\pi^{i}(1)} \cdot e_{\pi^{i}(1)} + a_{\pi ^{i-1}(1) \pi^{i-2}(1)}\cdot e_{ \pi^{i-2}(1)}=a_{\pi ^{i-1}(1)\pi^{i}(1)}e^{'}_{i+1} + a_{\pi ^{i-1}(1)\pi^{i-2}(1)}e^{'}_{i-1}. $
\end{proof}
\

\section*{Acknowledgements}
The author expresses his deep gratitude to Professor U. A. Rozikov for setting up the problem and for the useful suggestions and to A. Kh. Khudoyberdiyev for helpful suggestions.

\

\end{document}